\documentclass{amsart}

\RequirePackage[T1]{fontenc}
\RequirePackage[all]{xy}
\RequirePackage{algorithm}
\RequirePackage{amscd}
\RequirePackage{amsfonts}
\RequirePackage{amsmath}
\RequirePackage{amssymb}
\RequirePackage{graphicx}
\RequirePackage{epstopdf}
\RequirePackage{euscript}
\RequirePackage{fancyhdr}
\RequirePackage{latexsym}
\RequirePackage{verbatim}
\RequirePackage{mathrsfs}
\RequirePackage{ifthen}
\RequirePackage{stmaryrd}

\renewcommand{\geq}{\geqslant}
\renewcommand{\leq}{\leqslant}

\newcommand{\ib}{\item[{\rm (i)}]}
\newcommand{\ii}{\item[{\rm (ii)}]}

\usepackage{a4wide}
\usepackage{amsfonts,amssymb,latexsym,amsthm,mathtools}
\usepackage{comment} 
\usepackage{graphicx}
\usepackage{caption} 
\usepackage{url}

\newcommand{\re}{\mathfrak{R}}
\newcommand{\im}{\mathfrak{Im}}

\newcommand{\cal}{\mathcal}
\newcommand{\scr}{\mathscr}
\newcommand{\sg}{\sigma}

\renewcommand{\d}{\displaystyle}
\renewcommand{\b}{\overline}

\newcommand{\Z}{\mathbb{Z}}
\newcommand{\R}{\mathbb{R}}
\newcommand{\Q}{\mathbb{Q}}
\newcommand{\C}{\mathbb{C}}

\newcommand{\HH}{\mathbb{H}}

\newtheorem{theo}{Theorem}[section]

\newtheorem{lemm}[theo]{Lemma}

\newtheorem{cor}[theo]{Corollary}
\newtheorem{lem}[theo]{Lemma}

\newtheorem{rem}[theo]{Remark}


\newcommand*{\Cone}{\mathscr{C}}
\newcommand*{\CDSE}[2]{\check{#1}_{#2}}

\begin{document}

\title[Period functions and Moments of a weighted mean square of $L-$functions]{
On twisted period functions and Moments of a weighted mean square of Dirichlet $L-$functions on the critical line}
\author{S\'ebastien Darses -- Berend Ringeling -- Emmanuel Royer}
\date{}

\begin{abstract}
    We extend to Dirichlet $L-$functions associated with arbitrary primitive characters a range of objects and properties -- including Eisenstein series and period functions -- that were originally introduced and studied by Lewis and Zagier (2001), and later by Bettin and Conrey (2013) in the case of the Riemann zeta function, and more recently by Lewis and Zagier (2019) for odd real characters. These tools yield closed-form expressions for the moments of a measure defined via a weighted mean square of the $L-$function. These moments not only provide a complete characterization of the modulus of the $L-$function on the critical line, but also imply an infinite number of non-trivial positivity conditions valid for all primitive characters, real or not. The methods also involve a general form of an asymptotic formula based on the shifted Euler–Maclaurin summation formula, which may be of independent interest.
\end{abstract}

\address{CNRS -- Universit\'e de Montr\'eal CRM–CNRS \newline 
Aix-Marseille Universit\'e, CNRS, I2M, Marseille, France} 
\email{seb.darses@gmail.com}

\address{ CRM, Universit\'e de Montr\'eal, P.O. Box 6128, Centre-ville Station
Montr\'eal (Qu\'ebec) H3C 3J7, Canada} 
\email{b.j.ringeling@gmail.com}
\address{CNRS -- Universit\'e de Montr\'eal CRM–CNRS \newline 
Universit\'e Clermont Auvergne, CNRS, Laboratoire de math\'matiques Blaise Pascal, F-63000 Clermont-Ferrand, France.} 
\email{emmanuel.royer@math.cnrs.fr}

\footnote{\textit{Keywords:} Dirichlet $L-$function, Dirichlet Character, Moment, Weighted Mean Square, Period Function, Eisenstein Series, Bernoulli Polynomial, Non Central Stirling Number, Euler-Maclaurin summation formula.}

\maketitle

\section{Introduction}

\subsection{Main results}

We write $e(z)=e^{2\pi iz}$  ($i^2=-1$), for a complex number $z=\mathfrak{R}(z)+i \mathfrak{Im}(z)$. The letter $q$ denotes an integer $q\geq3$, and we use the convenient notation $\xi=\xi_q=e^{2\pi i/q}$.

For a character $\chi$ of conductor $q$, the corresponding Gauss sum and Dirichlet $L-$function are defined respectively as: $\d\tau(\chi) = \sum _{a=1}^{q-1} \chi (a) \xi^a$, and 
$\d L(s,\chi) = \sum_{n=1}^\infty \frac{\chi(n)}{n^s}$, for $\mathfrak{R}(s)>1$ and by analytic continuation on $\C$. Recall that $\tau(\chi)\neq 0$ for a primitive character $\chi$. 

The letters $N,a,b,c,j,k,\ell, m,n$ denote non-negative integers and $B_j(\cdot)$ denotes the $j$th Bernoulli polynomial. The Euler totient function is denoted by $\varphi$, and the number of divisors of $n$ by $d(n)$.

Following \cite{Kou82}, we define by induction the non-central Stirling numbers of the second kind as: $S_\alpha(n+1,k)=S_\alpha(n,k-1)+(k-\alpha)S_\alpha(n,k)$, $S_\alpha(n,0)=(-\alpha)^n$ if $n\geq0$, $S_\alpha(0,k)=0$ if $k\geq1$. The non central Stirling numbers arising in our study are $S_{-1/2}(n,k)$.\\

Our main results are Theorems \ref{main-th} and \ref{deriv-psi}.

\begin{theo} \label{main-th}
For all primitive characters $\chi\mod q$, with \(q>1\), and all $N\geq0$, 
\begin{multline*}
\int_{-\infty}^\infty \left|L\left(\frac{1}{2}+it, \chi\right)\right|^{2} \frac{t^N\ dt}{\cosh(\pi t)} \   = \  -\chi(-1) (-1)^{\lfloor \frac{N}{2}\rfloor} B_N\left(\frac{1}{2}\right) \frac{\varphi(q)}{q} \\
    + \sum_{k=1}^{N+1} S_{-\frac{1}{2}}(N,k-1) \frac{(2 \pi i q)^k}{q^2 k^2} \sum_{1\leq a,b\leq q}  B_{k}\left(\frac{a}{q}\right) B_{k}\left(\frac{b}{q}\right) \scr T_{\chi,N}(ab)\ \xi^{ab}_{q}\ ,
\end{multline*}
where\  $\ \scr T_{\chi,N}(c) = -i^N \left[\tau(\b \chi)\ \chi(c) +(-1)^N \tau(\chi)\ \b \chi(c) \right]$, $c\in \Z$.
\end{theo}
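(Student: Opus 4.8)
The plan is to convert the weighted integral into a vertical contour integral, unfold the $L$-values through the Hurwitz--Lerch decomposition (equivalently, through the twisted period function and Eisenstein series the paper constructs), and then read off the $t^{N}$-moments as Taylor coefficients by means of the shifted Euler--Maclaurin expansion together with the non-central Stirling identity.

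First I would use $\b{L(\tfrac12+it,\chi)}=L(\tfrac12-it,\b\chi)$ (clear for $\mathfrak{R}(s)>1$ and then by analytic continuation) to write $|L(\tfrac12+it,\chi)|^{2}=L(\tfrac12+it,\chi)L(\tfrac12-it,\b\chi)$, which is entire in $t$ because $\chi$ is non-principal. Substituting $s=\tfrac12+it$ and using $\cosh(\pi t)=\sin(\pi s)$ on $\mathfrak{R}(s)=\tfrac12$ gives
\[
\int_{-\infty}^{\infty}\Bigl|L\bigl(\tfrac12+it,\chi\bigr)\Bigr|^{2}\frac{t^{N}\,dt}{\cosh(\pi t)}=\frac{1}{i^{N+1}}\int_{(1/2)}L(s,\chi)\,L(1-s,\b\chi)\,\frac{\bigl(s-\tfrac12\bigr)^{N}}{\sin(\pi s)}\,ds,
\]
the integral converging absolutely because the exponential decay of $1/\sin(\pi s)$ in $\mathfrak{Im}(s)$ beats the polynomial (convexity) growth of $L(s,\chi)L(1-s,\b\chi)(s-\tfrac12)^{N}$ in the critical strip. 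Note already that one cannot simply push the contour to $\pm\infty$: the residues at the integers grow super-exponentially, like $k!\,(q/2\pi)^{k}$, because $L(1-k,\b\chi)$ and the Hurwitz values $\zeta(k,a/q)$ blow up, so the closed form must be produced by an asymptotic summation argument with a controlled remainder, which is exactly where the general shifted Euler--Maclaurin formula enters.

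To bring in the arithmetic, insert $L(s,\chi)=q^{-s}\sum_{a=1}^{q}\chi(a)\zeta(s,a/q)$ and the analogous expansion of $L(1-s,\b\chi)$, and apply Hurwitz's formula to the factors $\zeta(1-s,\cdot)$. Two collapses then take place: the Fourier series $B_{k}(x)=-\tfrac{k!}{(2\pi i)^{k}}\sum_{n\neq0}n^{-k}e(nx)$ converts the polylogarithm values at the rationals $a/q,\,b/q$ into the symmetric product $B_{k}(a/q)B_{k}(b/q)$; and, by primitivity of $\chi$, the two twisted character sums over $a$ and over $b$ collapse to Gauss sums, producing precisely $\scr T_{\chi,N}(ab)=-i^{N}\bigl[\tau(\b\chi)\chi(ab)+(-1)^{N}\tau(\chi)\b\chi(ab)\bigr]$ together with the phase $\xi^{ab}$. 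The accompanying $\Gamma$ and $q$-power factors contribute the coefficient $(2\pi iq)^{k}/(q^{2}k^{2})$ at the relevant half-integer shift, while the isolated term $-\chi(-1)(-1)^{\lfloor N/2\rfloor}B_{N}(\tfrac12)\varphi(q)/q$ is the boundary contribution of the Euler--Maclaurin expansion, i.e.\ the period-polynomial/cocycle part of the twisted period function: the sign $(-1)^{\lfloor N/2\rfloor}$ comes from evaluating the trigonometric gamma factor at half-integers, $\varphi(q)/q=\tfrac1q\sum_{a}|\chi(a)|^{2}$ from the diagonal, and, consistently, since $B_{N}(\tfrac12)=0$ for odd $N$ this term is present only for even $N$.

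Finally, the period function naturally computes the shifted-factorial quantities $\int_{\R}|L(\tfrac12+it,\chi)|^{2}\prod_{\ell=0}^{k-1}\bigl(it-\tfrac12-\ell\bigr)\dfrac{dt}{\cosh(\pi t)}$ (its derivatives at $x=1$, the relevant Mellin kernel being, up to normalisation, the power $x^{s-1}$, for which $s-1=it-\tfrac12$). To pass back to the genuine moments $\int|L|^{2}t^{N}/\cosh(\pi t)\,dt$ one invokes the non-central Stirling identity $x^{N}=\sum_{k}S_{-1/2}(N,k)\prod_{\ell=0}^{k-1}(x-\tfrac12-\ell)$ with $x=it$, which is just the recursion $S_{\alpha}(n+1,k)=S_{\alpha}(n,k-1)+(k-\alpha)S_{\alpha}(n,k)$ at $\alpha=-\tfrac12$, checked by induction; this reorganisation then produces the stated sum $\sum_{k=1}^{N+1}S_{-1/2}(N,k-1)\tfrac{(2\pi iq)^{k}}{q^{2}k^{2}}\sum_{a,b}B_{k}(a/q)B_{k}(b/q)\scr T_{\chi,N}(ab)\xi^{ab}$. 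I expect the main obstacle to be this last step combined with the convergence bookkeeping: since the term-by-term evaluation diverges, one must run the shifted Euler--Maclaurin summation with a remainder integral that has to be shown to vanish in the limit, all the while tracking the $i^{N}$ and $(-1)^{N}$ phases, the $\tfrac12$-shift, the sign $(-1)^{\lfloor N/2\rfloor}$, and the transition from single to double character sums, and verifying that the coefficients obtained are exactly $S_{-1/2}(N,k-1)$ and not some neighbouring quantity.
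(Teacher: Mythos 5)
Your overall architecture is the paper's: the identity $\cosh(\pi t)=\sin(\pi s)$ on $\re(s)=\tfrac12$ turns the moment into the Mellin--Plancherel pairing of $Q_\chi(s)=\pi L(s,\chi)L(1-s,\b\chi)/\sin(\pi s)$ against $(s-\tfrac12)^N$, which is exactly the statement that the moments are the derivatives at $0$ of $\scr R_\chi(w)=e^{w/2}A_\chi(e^w)$ with $\cal M[A_\chi]=Q_\chi$. Your identity $x^N=\sum_k S_{-1/2}(N,k)\prod_{\ell=0}^{k-1}(x-\tfrac12-\ell)$ is correct (it is the paper's chain-rule lemma for $e^{x/2}\varphi(e^x)$, read against the kernel $x^{it-1/2}$) and correctly converts the falling-factorial moments $A_\chi^{(k)}(1)$ into the power moments. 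You are also right that naive contour shifting fails and that the closed form must come from an asymptotic expansion with a controlled remainder.

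The genuine gap is the middle step, which is where the actual content of the theorem lies: the evaluation of the coefficients $A_\chi^{(k)}(1)$, i.e.\ the Taylor expansion of the period function $\psi_\chi$ at $z=1$ (the paper's Theorem \ref{deriv-psi}). Your proposed mechanism --- insert $L(s,\chi)=q^{-s}\sum_a\chi(a)\zeta(s,a/q)$, apply Hurwitz's formula, and invoke the Fourier series of $B_k$ --- cannot work as stated inside the contour integral: the expansion $B_k(x)=-k!\,(2\pi i)^{-k}\sum_{n\neq0}n^{-k}e(nx)$ requires an integer exponent $k$, whereas $s$ runs over the line $\re(s)=\tfrac12$; Bernoulli polynomials at $a/q$ can only enter through the special values $\zeta(-n,a/q)=-B_{n+1}(a/q)/(n+1)$, and these arise from an asymptotic expansion near the real axis, not from Hurwitz's formula. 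Moreover your sketch gives no account of the coupling phase $\xi^{ab}$, which is precisely what prevents the double sum $\sum_{a,b}B_k(a/q)B_k(b/q)\chi(ab)\xi^{ab}$ from degenerating into a product of two generalized Bernoulli numbers. In the paper this phase is produced by expanding $E_\chi$ at the points $\pm1$ rather than at $i\infty$: splitting $d(n)$ along residue classes in $E_\chi(z\pm1)$ yields inner sums $\sum_b\chi(b)e(\pm bj/q)e(b(j/q+k)z)$, and a \emph{double} application of the shifted Euler--Maclaurin formula (once in $b$ with shift $a/q$, once in the outer variable with shift $j/q$) delivers one Bernoulli factor from each application together with the phase; the divergent $\log(w)/w$ and $1/w$ terms from the two cusps must then be shown to cancel, which in turn requires the analytic continuation of $\psi_\chi$ to $\C'$ via the functional-equation/contour argument (the paper's Lemma \ref{fund-lemma}), also absent from your plan. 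As written, the proposal establishes the reduction of Theorem \ref{main-th} to Theorem \ref{deriv-psi}, but not Theorem \ref{main-th} itself; you have located the main obstacle in the final Stirling reorganisation, whereas that step is routine and the difficulty sits one step earlier.
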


A few remarks:
\begin{enumerate}
    \item 
    The formula involves only rational numbers, power of $\pi$, and $\cos,\sin$ evaluated in $2\pi\Q$. 
    Note that $(-1)^{\lfloor \frac{N}{2}\rfloor} B_N\left(\tfrac{1}{2}\right)=0$ if $N$ is odd, and $(-1)^{\lfloor \frac{N}{2}\rfloor} B_N\left(\tfrac{1}{2}\right) > 0$ if $N\geq0$ is even. The Stirling numbers involved are positive, while the entire subsequent term is real and may change sign. If $\chi$ is real, the odd moments vanish. If $\chi$ is non-real, the odd moments may be positive or negative. See Section 6 for further details.
    
    \item The weight $1/\cosh(\pi t)=|\Gamma(1/2+it)|^2/\pi$ (Reflexion formula) is not merely cosmetic; it encodes a fundamental interaction between the Gamma function and $L(\cdot,\chi)$, which leads to this closed-form expression. 
    \item A nice curiosity: $t\mapsto 2\left|L\left(\frac{1}{2}+it, \chi_{2^2}\right)\right|^{2} /\cosh(\pi t)$ is a probability density on $\R$, where $\chi_4$ is the odd character modulo $4$, applying Theorem \ref{main-th} with $\chi=\chi_4$ and $N=0$.
\end{enumerate}

Let $\im(z)>0$ and $\chi$ be a primitive character $\mod q$. Define:
\begin{eqnarray}
E_\chi(z) & = & \frac{1}{\tau(\chi)} \sum_{n= 1}^\infty \chi(n)d(n)e\left(nz/q\right) \label{Echi}\\
\psi_\chi(z) & = & E_\chi(z) - \frac{\chi(-1)}{z}\ E_{\b\chi}\left(-\frac{1}{z}\right). \label{psichi}
\end{eqnarray}
Once the tools developed in Section \ref{sec-gene} are in place -- establishing that $\psi_\chi$ has an analytic continuation to $\C\setminus (-\infty,0]$ -- the previous result is a consequence of the following identity.

\begin{theo} \label{deriv-psi}
For all primitive characters $\chi\mod q$, and all $n\geq0$,
    \begin{eqnarray}
\psi_{n,\chi}:=  \frac{\psi_\chi^{(n)}(1)}{n!} 
        = \frac{\varphi(q)}{2 \pi i q} \frac{(-1)^{n}}{n+1} + \scr B_{\chi}(n) + (-1)^n \sum_{j=0}^n {n \choose j}  \scr B_{\b\chi}(j),
    \end{eqnarray}
where 
\begin{eqnarray*}
    \scr B_\chi(j)
         & = & \frac{(2 \pi i q)^j}{\tau(\chi) (j+1)(j+1)!} 
            \sum_{1\leq a,b\leq q} B_{j+1}\left(\frac{a}{q}\right) B_{j+1}\left(\frac{b}{q}\right) \chi(ab)\ \xi^{ab}.
\end{eqnarray*}
\end{theo}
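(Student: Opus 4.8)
The plan is to compute the Taylor coefficients of $\psi_\chi$ at $z=1$ by differentiating the defining identity \eqref{psichi} term by term, using the analytic continuation of $\psi_\chi$ to $\C\setminus(-\infty,0]$ established in Section \ref{sec-gene}. The starting point is a closed form for the derivatives of $E_\chi$. Since $E_\chi(z)=\tau(\chi)^{-1}\sum_{n\geq1}\chi(n)d(n)e(nz/q)$, I expect that the tools of Section \ref{sec-gene} (a shifted Euler–Maclaurin expansion applied to the divisor-twisted exponential sum) produce, for $\im z>0$ and then by continuation near $z=1$, an expansion of $E_\chi$ near the "cusp" whose Taylor coefficients at $1$ are exactly the numbers $\scr B_\chi(j)$ together with the polar/constant contribution $\frac{\varphi(q)}{2\pi i q}\cdot\frac{1}{j+1}$; the double sum over $1\le a,b\le q$ with the product of Bernoulli polynomials $B_{j+1}(a/q)B_{j+1}(b/q)$ and the factor $\xi^{ab}$ is the characteristic signature of applying Euler–Maclaurin twice, once for each divisor factor, after splitting $n=ab$ along residues mod $q$.

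Concretely, I would proceed as follows. First, establish the identity
\[
\frac{1}{n!}\,\frac{d^n}{dz^n}\Big|_{z=1}E_\chi(z)=\frac{\varphi(q)}{2\pi i q}\cdot\frac{1}{n+1}+\scr B_\chi(n),
\]
i.e.\ identify the Taylor coefficients of $E_\chi$ at $1$; this is where the real work lies and is the step I expect to be the main obstacle, since it requires turning the oscillatory series for $E_\chi$ into an asymptotic/convergent expansion and extracting coefficients uniformly — presumably this is precisely what the ``general form of an asymptotic formula based on the shifted Euler–Maclaurin summation formula'' in the abstract is set up to deliver. Second, differentiate the term $z\mapsto -\frac{\chi(-1)}{z}E_{\b\chi}(-1/z)$ $n$ times at $z=1$. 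By the Leibniz/chain rule, $\frac{d^n}{dz^n}[z^{-1}E_{\b\chi}(-1/z)]$ evaluated at $z=1$ is a finite combination of the derivatives $E_{\b\chi}^{(m)}(-1)$; since $-1/z$ maps a neighbourhood of $1$ to a neighbourhood of $-1$ and $(-1)$ is the image of $1$ under $z\mapsto-1/z$... more carefully, one uses that near $z=1$, $w=-1/z$ is near $-1$, but the relevant evaluation point after the involution is $w=-1$, so one needs the Taylor data of $E_{\b\chi}$ at $-1$; one then invokes the functional relation content again, or simply the bookkeeping identity $\frac{d^n}{dz^n}\big|_{z=1}\big[z^{-1}f(-1/z)\big]=(-1)^{n}n!\sum_{j=0}^n\binom{n}{j}\frac{f^{(j)}(-1)}{j!}\cdot(\text{sign/factorial factors})$, which after normalization yields exactly the combinatorial factor $(-1)^n\sum_{j=0}^n\binom{n}{j}$ appearing in the statement.

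Thus the third step is purely algebraic: combine the two pieces. Writing $\psi_{n,\chi}=\frac{1}{n!}E_\chi^{(n)}(1)-\chi(-1)\cdot\frac{1}{n!}\frac{d^n}{dz^n}\big|_{z=1}\big[z^{-1}E_{\b\chi}(-1/z)\big]$, substitute the coefficient formula for $E_\chi$ from Step 1, substitute the analogous formula for $E_{\b\chi}$ at the relevant point using Step 2's Leibniz expansion, and check that the $\chi(-1)$ factors combine with the parity of the Bernoulli/Stirling data so that the polar terms add to $\frac{\varphi(q)}{2\pi i q}\cdot\frac{(-1)^n}{n+1}$ and the remaining terms organize into $\scr B_\chi(n)+(-1)^n\sum_{j=0}^n\binom{n}{j}\scr B_{\b\chi}(j)$. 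The verification that the constant term is $\frac{\varphi(q)}{2\pi i q}\cdot\frac{(-1)^n}{n+1}$ rather than something messier will use the elementary identity $\sum_{j=0}^n\binom{n}{j}\frac{(-1)^j}{j+1}=\frac{1}{n+1}$ (equivalently $\int_0^1(1-x)^n\,dx$), which is the combinatorial heart of why the answer is clean; the rest is matching coefficients, with no further analytic input needed beyond Step 1.
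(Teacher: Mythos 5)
Your overall strategy coincides with the paper's: expand each of the two pieces of $\psi_\chi$ near $z=1$ via a double application of the shifted Euler--Maclaurin formula (once per divisor factor, after splitting along residue classes mod $q$), then combine. However, your Step 1 contains a genuine error that propagates through the rest. The function $E_\chi$ is \emph{not} analytic at $z=1$ (it is defined on $\HH$ and blows up as $z\to 1$), so the identity $\tfrac{1}{n!}E_\chi^{(n)}(1)=\tfrac{\varphi(q)}{2\pi i q}\cdot\tfrac{1}{n+1}+\scr B_\chi(n)$ is not even well posed. What the Euler--Maclaurin analysis actually delivers (the paper's Lemma~\ref{Epm1}) is an asymptotic expansion of $E_\chi(1+iw)$ as $w\to0$ in $\re_{>0}$ containing genuinely divergent terms $-\tfrac{\varphi(q)}{2\pi q}\tfrac{\log w}{w}-\tfrac{\kappa(q)}{w}$, in addition to a regular part whose coefficients are (up to the factor $i^n$ from converting $w^n$ to $(z-1)^n$) your $\scr B_\chi(n)$. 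The same holds for $E_{\b\chi}$ near $-1$, so the ``Taylor data of $E_{\b\chi}$ at $-1$'' invoked in your Step 2 does not exist either; one must instead substitute $-1/(1+it)=-1+i\,t/(1+it)$ into the singular expansion and track the singular parts explicitly.

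As a consequence, your account of where the term $\tfrac{\varphi(q)}{2\pi i q}\tfrac{(-1)^n}{n+1}$ comes from is wrong: it is not a Taylor coefficient of $E_\chi$ at $1$, and it is not produced by the identity $\sum_{j=0}^n\binom{n}{j}\tfrac{(-1)^j}{j+1}=\tfrac{1}{n+1}$, which plays no role in the argument. It arises because the two logarithmic singularities cancel only imperfectly: the first piece contributes $-\tfrac{\varphi(q)}{2\pi q}\tfrac{\log t}{t}$ while the transformed piece contributes $-\tfrac{\varphi(q)}{2\pi q}\tfrac{\log\left(t/(1+it)\right)}{t}$ together with matching $1/t$ poles that do cancel, leaving the regular function $-\tfrac{\varphi(q)}{2\pi q}\tfrac{\log(1+it)}{t}$ whose Taylor expansion supplies exactly $\tfrac{\varphi(q)}{2\pi i q}\tfrac{(-1)^n}{n+1}$. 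Similarly, the factor $(-1)^n\sum_{j}\binom{n}{j}$ comes not from a Leibniz bookkeeping identity for derivatives at $-1$ but from expanding $\tfrac{t^j}{(1+it)^{j+1}}$ in powers of $t$. To repair the proof you must replace ``Taylor coefficients at $1$ and $-1$'' throughout by ``regular parts of the singular expansions,'' justify the cancellation of the $1/t$ and $\log(t)/t$ terms (forced by the analyticity of $\psi_\chi$ on $\C'$, which must be established beforehand), and carry out the two substitutions above.
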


This generalizes the formula obtained by Bettin and Conrey \cite[Lemma 1, Part two]{BC13b} for the coefficient $\psi_n$ of their period function $\psi$, see Eq. (\ref{psi-k}) here. However, our proof relies on a different approach than that of \cite{BC13b}, as explained below.

The curious reader will be pleased to recover $\psi_n$ from $\psi_{n,\chi}$ by specializing to $q=1$, $a=b=1$, and $\chi(1)=1$ in Theorem \ref{deriv-psi}.

\bigskip

\subsection{Why may these identities be interesting/useful?}

\begin{enumerate}
    \item The moments in Theorem \ref{main-th} arise in {\em generalizations of the Nyman–Beurling–B\'aez-Duarte criteria}. These criteria are approximation problems in $L^2(0,\infty)$ that provide reformulations of the Riemann Hypothesis (RH). They can be generalized in two directions: on the one hand, by altering the structure of the approximation problem related to the Riemann zeta function (see \cite{DH21b,ADH22}); and on the other hand, by considering generalizations of the $\zeta$ function itself (see \cite{DFMR13, LZ19}). In particular, Lewis and Zagier studied in \cite{LZ19} a concrete auto-correlation function (or inner product) involving the primitive character of conductor $4$, and derived an equivalent formulation of the Generalized Riemann Hypothesis for odd characters. The quantity we investigate here is related to a different structure of the inner product; see \cite{DH24} for further details.
\smallskip
    \item Theorem \ref{main-th} constitutes a {\em determinate Hamburger moment problem} — that is, the right-hand side provides a complete characterization, in a discrete setting, of $|L(1/2+it,\chi)|$, similar to the case of $\zeta$ (cf. \cite{DH24}). A basic upper bound on $L(1/2+it,\chi)$ (see e.g. \cite[Chap. 12, (14)]{Dav80}) is enough to ensure that the moments satisfy a standard determinacy criterion (cf. \cite[Prop. 1.5, p. 88]{Sim98}). The central question then becomes how to extract information about the behavior of the $L$-function, or about specific values, particularly for small $t$, as suggested by O. Ramaré.
    \item The computation leading to Theorem \ref{deriv-psi} is a {\em nontrivial application of an asymptotic formula} for sums of the form $\sum_n f((n+\alpha)t)$ — a method described by Zagier as “extremely useful, and not sufficiently well known” (see \cite[p.11]{Zag06}). This approach relies on a {\em shifted Euler–Maclaurin summation formula}. For our purposes, we require a slight extension of this formula to some complex domain, which is presented in detail in Section \ref{sec-mac}, and may be of independent interest.
    \smallskip
    \item The formulas presented here are {\em closed-form identities} — that is, finite sums of tabulated quantities — which allow for the evaluation of the integral with near-infinite precision. For numerous insightful examples, see \cite{BoC13}. To the best of our knowledge, the first closed-form expression involving an integral of an $L$-function was given by Lewis and Zagier for $\chi_4$, by combining Proposition 1, Proposition 2 (p.6), Equation (21), and the Mellin isometry in \cite{LZ19}. 
    \smallskip
    \item 
    Moreover, the variety of tools involved, and the numerous connections with modular forms, Fourier analysis, combinatorics, and special functions, may be of independent interest; see e.g. \cite{BLZ15, LZ19, AIK14, Com12}. The toolbox developed in Section \ref{sec-gene} can also be used to extend to $L$-functions the sixth moment formula obtained for $\zeta$ in \cite{DN24}, as suggested by  Y. Lamzouri.\smallskip 
    \item Last but not least, considering $2N$ in Theorem \ref{main-th}, the positive right hand side yields a {\em sequence of non trivial positive sums} involving all primitive characters, wether real or not.\\
    For any {\em even} $N$, the number $\chi(-1)(-1)^{\lfloor \frac{N}{2}\rfloor} B_N\left(\tfrac{1}{2}\right)\frac{\varphi(q)}{q}$, depending on its sign, then provides either a lower or an upper bound, in terms of $q$ and $N$, for the whole sum in the formula.
\end{enumerate}

\subsection{Previous works and main ideas} 

\subsubsection{A formula in the case of $\zeta$}
In \cite{DH24} it is proven that for all 
$N\geq 0$, writing $\sum_\varnothing :=0$,
\begin{eqnarray*}
\frac{(-4)^N}{2}\int_{-\infty}^\infty \left|\zeta\left(\frac{1}{2}+it\right)\right|^2 \frac{t^{2N} \ dt}{\cosh(\pi t)}  = \log(2\pi)-\gamma -4N + \left(\frac{4^N}{2}-1\right) B_{2N} + \sum_{j=2}^{2N} T_{2N,j}\frac{\zeta(j)B_{j}}{j}, \label{moment-poly}
\end{eqnarray*} 
where
$
T_{N,j} = (j-1)!\sum_{2\leq n \leq N} \binom{N}{n} 2^{n} \left[(-1)^n S(n+1,j) + (-1)^j S(n,j-1)\right]$. This formula can be reformulated by means of the non central Stirling number $S_{-1/2}(N,j)$.
\smallskip

A generalization of this formula
has been anticipated independently by C. Delaunay and O. Ramaré \cite{Ram22}, for $L-$functions of quadratic character, at least.

\subsubsection{Lewis-Zagier and Folsom twisted period functions} Our definitions (\ref{Echi}-\ref{psichi}) are consistent with the functions introduced by Lewis and Zagier in \cite[p.21]{LZ19} in the case of the primitive character $\chi_4$ of conductor $4$ (the authors also notice that their study can be generalized to primitive odd Dirichlet character $\chi_D$). Indeed, they define 
\begin{eqnarray*}
    f(z) = \sum_{n\ge1}\chi_4(n)d(n)e\left(nz/4\right),\qquad 
    \psi_f(z) = f(z) + \frac{1}{z} f(-1/z).
\end{eqnarray*}
Note that $\chi_4(-1)=-1$, $\b\chi_4=\chi_4$ and $\tau(\b \chi_4)=\tau(\chi_4)$. Hence we see that, up to the common factor $\tau(\chi)^{-1}$, our $E_{\chi_4}$ (resp. $\psi_{\chi_4}$) is their $f$ (resp. $\psi_f$).
Let us notice that the factors $\tau(\chi)^{-1}$, $\chi(-1)$ and the conjugation $\b\chi$ in (\ref{Echi}-\ref{psichi}), will appear as reasonable choices to obtain an exact relation between $\psi_\chi$ and $A_\chi$, see the proof in Section \ref{bettin-conrey}.

A. Folsom \cite{Fol20} also proposed another large class of twisted Eisenstien series, based on general twisted divisor functions this time, and manage to show the analytic continuation of the associated period functions to $\C'$.

\subsubsection{Bettin-Conrey works in the case of $\zeta$ and the connection with an identity of Ramanujan}
The function $A$ introduced in \cite{DH21a}, is studied for complex numbers in \cite{DN24}:
\begin{eqnarray*}
A(z) & = & \int_0^\infty \left(\frac{1}{xz}-\frac{1}{e^{xz}-1}\right)\left(\frac{1}{x}-\frac{1}{e^{x}-1}\right)dx, \qquad \re(z)>0.
\end{eqnarray*}
It is an auto-correlation function and shows up in a particular generalization of the Nyman-Beurling criterion \cite{DH21b} (see \cite{BDBLS05} for a study of the auto-correlation function in the historical criterion).

The function $A$ has an analytic continuation to $\C\setminus (-\infty,0]$, and
allows to connect continuous and discrete Fourier analysis related to $\zeta$ through the following reformulations of identities by Ramanujan (\cite{Ram15}, 1915) and by Bettin and Conrey (\cite{BC13b}, 2013), respectively:
\begin{eqnarray}
\scr R(z) := e^{z/2} A(e^z) & = &  \frac{1}{2} 
\int_{-\infty}^{\infty} e^{i z t}\left|\zeta\left(\frac{1}{2}+it\right)\right|^{2} \frac{dt}{\cosh(\pi t)}, \quad -\pi<\im(z)<\pi, \\
A(z) & = & \frac{i\pi}{4} \psi(z) + r(z), \quad z\in \C\setminus (-\infty,0],
\end{eqnarray}
where
$\d r(z)  = \frac{\log(2\pi)-\gamma}{2}\left(\frac{1}{z}+1\right) + \frac{1}{2}\left(\frac{1}{z}-1\right) \log(z)$, and the weight one Eisenstein series and period function defined in \cite{BC13a,BC13b} read: for $\im(z)>0$, 
\begin{eqnarray*}
E_1(z) = 1-4\sum_{n=1}^\infty d(n)e(n z), \qquad
\psi(z) = E_1(z)-1/z\ E_1(-1/z).
\end{eqnarray*}
Based on ideas in \cite{LZ01} they prove that $\psi$ has an analytic continuation to $\C\setminus (-\infty,0]$, and moreover, see 
\cite[Lemma 1, Part two]{BC13b}, for $|z|<1$, 
\begin{eqnarray} \label{psi-k}
\psi(1+z) =\frac{2i}{\pi}\sum_{k=0}^\infty \psi_k z^k, \quad \psi_k = \frac{(-1)^k}{k+1} + 2 \sum_{j=1}^{k-1}(-1)^{k-j}{k \choose j} \frac{\zeta(j+1)B_{j+1}}{j+1}.
\end{eqnarray}

To obtain the moments in our study, a
natural method is then to differentiate $z\mapsto e^{z/2} A(e^z)$ at $0$ and so $A$ at $1$, and use Eq. (\ref{psi-k}). We provide here all the necessary tools to do so in the case of the $L-$function. 
To prove the counter-part of (\ref{psi-k}) for $\psi_\chi$, we use an original method based on a double application of a quite general version of the shifted Euler-Maclaurin summation formula discussed by Zagier \cite{Zag06}, which may be one of the interests of this paper.

\subsection{Additional notations, properties}

We define $\R_{>0}=(0,\infty)$, $\R_{\geq 0}=[0,\infty)$, $\R_{<0}=(-\infty,0)$ and so on. We will work with different complex domains: the Poincar\'e upper half-plane $\HH=\{x+iy, y>0\}$, the right half-plane $\re_{>0}=\{x+iy, x>0\}$, and $\C'=\C\setminus (-\infty,0]$. 

The fractional part of a real number \(x\) is \(\{x\}=x-\lfloor x\rfloor\) where \(\lfloor x\rfloor\) is the largest integer less than or equal to \(x\). For example, for any \(\alpha\in(0,1)\), on has
\[
\{x-\alpha\}=\begin{cases*}
    x-\alpha+1 & if \ \(0<x\leq\alpha\),\\
    x-\alpha & if \ \(\alpha<x\leq 1\).
\end{cases*}
\]

The Gamma function is defined as: $\d
\Gamma(s) = \int_0^\infty e^{-x}x^{s-1}dx, \quad \mathfrak{R}(s)>0$.

The Fourier transform $\cal F[f]$ and Mellin transform $\cal M[g]$ are defined as:
\begin{eqnarray*}
    \cal F[f](w) & = & \int_{-\infty}^\infty e^{-ixw}f(x)dx,\quad w\in\R, \\
    \cal M[g](s) & = & \int_0^\infty g(x)x^{s-1}dx, \quad 0<\re(s)<1,
\end{eqnarray*}
for $f\in L^1(\R)$ and $g:\R^+\to\R$ such that $\d \int_0^\infty |g(x)|x^{\sg-1}dx<\infty$ for all $\sg\in(0,1)$.

We will also encounter the general Gauss sum
\begin{eqnarray*}
    \tau(n,\chi ) = \sum _{a=1}^{q-1} \chi (a) \xi^{an} = \b \chi(n)\tau(\chi).
\end{eqnarray*}
Recall that $\b{\tau(\chi)}=\chi(-1)\tau(\b \chi)$. 

The Bernoulli polynomials $B_n$ are defined as $\d B_n(x)=\sum_{k=0}^n {n \choose k} B_{n-k}x^k$ where $B_k$ are the $k-$th Bernoulli number: $B_0=1,\ B_1=-1/2,\ B_2=1/6,\ B_3=0$, etc. 

See e.g. \cite[p.189]{Ap08} for a review of many important relations.

\subsection{Outline}
Section \ref{sec-gene} is devoted to the study of the various generalizations $A_\chi,\psi_\chi$ of the functions $A$ and $\psi$. Sections \ref{sec-proof-th2} and \ref{sec-proof-th1} contain the proofs of Theorems \ref{deriv-psi} and \ref{main-th}, respectively. Section \ref{sec-mac} presents a general form of the shifted Euler–Maclaurin summation formula, which is used in the proof of Theorem \ref{deriv-psi}. Finally, Section \ref{sec-num} collects several numerical experiments.

\bigskip

\section{The various generalizations, their relations and analytic properties}
\label{sec-gene}

\subsection{The generalization $A_\chi$}

The following relation, for $0<\mathfrak{R}(s)<1$,
\begin{eqnarray*}\label{mellin-gamma-zeta}
\Gamma(s)\zeta(s) = \int_0^\infty f(x) x^{s-1}dx , \quad f(x) = \sum_{n=1}^\infty e^{-nx} - \frac{1}{x} = \frac{1}{e^{x}-1}-\frac{1}{x},\ x>0,
\end{eqnarray*}
can be generalized to $L-$functions through the classical representation, see e.g. \cite{IR90} p.263,
\begin{eqnarray} \label{mellin-g-L}
\Gamma(s) L(s,\chi) & = & \int_0^\infty f_\chi(x) x^{s-1}dx, \quad \re(s)> 0,
\end{eqnarray} 
where \begin{eqnarray} \label{fchi}
    f_\chi(x) & = & \sum_{n=1}^\infty \chi(n) e^{-nx}, \quad x>0.
\end{eqnarray}

A possible generalization of the auto-correlation function $A$ is then:
\begin{eqnarray} \label{A-chi}
A_\chi(v) & = & \int_0^\infty f_\chi(xv) f_{\b \chi}(x) dx,\quad v>0.
\end{eqnarray}
The reason for conjugating one factor will appear in the proof of Lemmas \ref{Achi-Bchi} and \ref{fund-lemma}.

We provide a few basic properties of $f_\chi$ and $A_\chi$, which will be useful for the proofs. 

\begin{lemm} \label{f-chi}
The functions $f_\chi$ and $A_\chi$ can be extended to continuous functions at 0. Moreover, $f_\chi(x)=O(e^{-x})$  as $x\to+\infty$, and $A_\chi(v)=O(1/v)$  as $v\to+\infty$.
In particular, $\cal M[A_\chi](s)$ is well defined for $0<\re(s)<1$. 
\end{lemm}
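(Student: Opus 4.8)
The plan is to establish each claim in turn, exploiting the Dirichlet series structure of $f_\chi$ and the known bound $\chi(n) = O(1)$.

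\medskip

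\noindent\textbf{Continuity of $f_\chi$ at $0$.} For a primitive character $\chi$ with $q > 1$ we have $\sum_{n=1}^q \chi(n) = 0$ by orthogonality, so the partial sums $\sum_{n \le M}\chi(n)$ are bounded uniformly in $M$. First I would write, for $x > 0$,
\[
f_\chi(x) = \sum_{n=1}^\infty \chi(n) e^{-nx},
\]
and apply Abel summation (summation by parts) with $a_n = \chi(n)$, $\varphi(n) = e^{-nx}$: writing $C(M) = \sum_{n \le M}\chi(n) = O(1)$, one gets
\[
f_\chi(x) = \sum_{n=1}^\infty C(n)\bigl(e^{-nx} - e^{-(n+1)x}\bigr) = (1 - e^{-x})\sum_{n=1}^\infty C(n) e^{-nx}.
\]
The right-hand side is an absolutely convergent series for $x > 0$ bounded by $O\!\left((1-e^{-x})\sum_n e^{-nx}\right) = O(1)$, and in fact each term is a continuous function of $x \ge 0$ with the series normally convergent on $[\delta, \infty)$; a little more care (e.g.\ splitting the tail and using $1 - e^{-x} \le x$) shows $f_\chi$ extends continuously to $0$ with $f_\chi(0^+) = \lim_{x\to0^+}(1-e^{-x})\sum_n C(n)e^{-nx}$, which is finite. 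Alternatively, and perhaps more cleanly, one observes $f_\chi(x) = \sum_{a=1}^{q-1}\chi(a)\,\frac{e^{-ax}}{1 - e^{-qx}}$ by grouping $n$ into residue classes mod $q$; since $\sum_a \chi(a) = 0$, the apparent pole at $x=0$ of $\frac{1}{1-e^{-qx}}$ cancels, giving a function holomorphic in a neighbourhood of $0$, hence continuous there. I expect to use this closed form, as it simultaneously handles the next point.

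\medskip

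\noindent\textbf{Decay of $f_\chi$ at $+\infty$.} From $f_\chi(x) = \sum_{a=1}^{q-1}\chi(a)\,e^{-ax}/(1 - e^{-qx})$, as $x \to +\infty$ the denominator tends to $1$ and the numerator is dominated by its smallest exponent, giving $f_\chi(x) = \chi(1) e^{-x} + O(e^{-2x}) = O(e^{-x})$. (If $\chi(1) = 1$ always for a character, the leading term is exactly $e^{-x}$; in any case the bound $O(e^{-x})$ holds.) Equivalently, directly from \eqref{fchi}, $|f_\chi(x)| \le \sum_{n\ge1} e^{-nx} = e^{-x}/(1-e^{-x}) = O(e^{-x})$ for $x \ge 1$, which is the quickest route.

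\medskip

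\noindent\textbf{Continuity and decay of $A_\chi$.} For continuity at $0$: since $f_\chi$ and $f_{\b\chi}$ are continuous on $[0,\infty)$ and each is $O(e^{-x})$ at infinity (hence in particular $O(1)$ near $0$ and absolutely integrable), the integrand $f_\chi(xv)f_{\b\chi}(x)$ in \eqref{A-chi} is, for $v$ in a bounded set, dominated by an integrable function of $x$ independent of $v$ (use $|f_\chi(xv)| \le \|f_\chi\|_\infty$ for $xv$ small and $|f_\chi(xv)| = O(e^{-xv})$ for $xv$ large, together with $|f_{\b\chi}(x)| = O(e^{-x})$); dominated convergence then gives continuity of $v \mapsto A_\chi(v)$ on $[0,\infty)$, with $A_\chi(0) = f_\chi(0)\int_0^\infty f_{\b\chi}(x)\,dx$. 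For the decay: substitute $u = xv$ to get $A_\chi(v) = \frac1v\int_0^\infty f_\chi(u) f_{\b\chi}(u/v)\,du$; bound $|f_{\b\chi}(u/v)| \le \|f_{\b\chi}\|_\infty$ and use $\int_0^\infty |f_\chi(u)|\,du < \infty$ to conclude $A_\chi(v) = O(1/v)$ as $v \to +\infty$. Finally, $\cal M[A_\chi](s)$ is well defined for $0 < \re(s) < 1$ because near $0$ the factor $x^{s-1}$ is integrable against the bounded (continuous) $A_\chi$ since $\re(s) > 0$, and near $+\infty$ the bound $A_\chi(x) = O(1/x)$ makes $A_\chi(x)x^{s-1} = O(x^{\re(s)-2})$ integrable since $\re(s) < 1$.

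\medskip

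\noindent\textbf{Main obstacle.} None of the steps is deep; the one requiring genuine care is the continuity of $f_\chi$ at $0$, since the naive series \eqref{fchi} is not absolutely convergent down to $x = 0$. The cleanest fix is to pass to the partial-fraction form $f_\chi(x) = \sum_{a=1}^{q-1}\chi(a)e^{-ax}/(1-e^{-qx})$ and invoke $\sum_a \chi(a) = 0$ to cancel the singularity; the rest is routine dominated-convergence and geometric-series estimation.
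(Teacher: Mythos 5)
Your proof is correct and follows essentially the same route as the paper: the paper gets continuity of $f_\chi$ at $0$ from the generalized-Bernoulli-number expansion of $\sum_a\chi(a)\,x e^{ax}/(e^{qx}-1)$, which is exactly your closed form $\sum_a\chi(a)e^{-ax}/(1-e^{-qx})$ with the singularity cancelled by $\sum_a\chi(a)=0$, and it obtains the $O(1/v)$ decay of $A_\chi$ via the functional equation $A_\chi(v)=A_{\b\chi}(1/v)/v$, which is precisely your substitution $u=xv$. The remaining steps (crude geometric-series bound for $f_\chi$ at infinity, dominated convergence for continuity of $A_\chi$ at $0$, and the resulting integrability of $A_\chi(x)x^{s-1}$) coincide with the paper's.
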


\begin{proof}
The continuity of $f_\chi$ at $0$ is obtained, for instance, by the analytic continuation given by 
\begin{eqnarray} \label{dev-fchi}
    f_\chi(z) & = & -\sum_{n=0}^\infty (-1)^n\frac{B_{n+1,\chi}}{(n+1)!}z^n, \quad |z|<2\pi/q,
\end{eqnarray}
where $(B_{n,\chi})_{n\geq 0}$ is defined by: $\d \sum_{a=1}^{q-1}\chi(a)\frac{x e^{ax}}{e^{qx}-1}=\sum_{n= 0}^\infty B_{n,\chi}\frac{x^n}{n!}$, see \cite[p.264]{IR90}. 
A crude bound from the definition (\ref{fchi}) yields the bound at infinity for $f_\chi$.

The dominated convergence theorem provides the continuity of $A_\chi$ at $0$ for any character, and the functional equation
\begin{eqnarray*} 
    A_\chi(v) = A_{\b\chi}(1/v) / v = \overline{A_{\b\chi}(1/v)} / v, \quad v>0,
\end{eqnarray*}
gives the bound at infinity for $A_\chi$, which allows to conclude.
\end{proof}

\medskip

\subsection{A relation between $A_\chi$ and $L(\cdot,\chi)$: Analytic properties}

We now propose a natural generalization of the function $\scr R$, introduced by Ramanujan in \cite{Ram15}, and of the function $Q$, introduced by Bettin and Conrey in \cite{BC13b}:
\begin{eqnarray*}
    \scr R_\chi(x) & = & e^{x/2}A_\chi(e^x)\\
    Q_\chi(s) & = & \Gamma(s) L(s,\chi) \Gamma(1-s) L(1-s,\b \chi) \ =\  \frac{\pi}{\sin(\pi s)}L(s,\chi)L(1-s,\b\chi). 
\end{eqnarray*}

The following lemma generalizes the formulation given in \cite{DN24}, adapting {\em mutatis mutandis} the proof in the case of $A_\chi,\scr R_\chi, Q_\chi$. We only give some essential details.

\begin{lemm} \label{Achi-Bchi}
We have
\begin{eqnarray} \label{mellinA}
\cal M [A_\chi](s) & = & Q_\chi(s), \quad 0<\mathfrak{R}(s)<1.
\end{eqnarray}
Moreover, $\scr R_\chi$, respectively $A_\chi$, has an analytic continuation on $\cal W=\{x+iy, -\pi<y<\pi\}$, resp. on  $\C'=\C\setminus (-\infty,0]$, given by:
\begin{eqnarray}
\scr R_\chi(w) & = & \frac{1}{2}\int_{-\infty}^\infty e^{itw}\left|L\left(\frac{1}{2}+it, \b \chi\right)\right|^{2} \frac{dt}{\cosh(\pi t)}, \quad w\in \cal W, \\
A_{\chi}(z) & = & z^{-1/2}\scr R_\chi(\log z) \ = \ \int_{-\infty}^\infty z^{-\frac{1}{2}+it} \left|L\left(\frac{1}{2}+it, \b \chi\right)\right|^{2} \frac{dt}{\cosh(\pi t)}, \quad z\in \C'.
\end{eqnarray}

\end{lemm}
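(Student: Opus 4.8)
The plan is to follow the Mellin-transform route used for $\zeta$ in \cite{DN24}, the new input being only the bookkeeping of Gauss sums and conjugations. First I would prove \eqref{mellinA} via the Mellin convolution theorem. Starting from $A_\chi(v)=\int_0^\infty f_\chi(xv)f_{\b\chi}(x)\,dx$, the substitution $u=xv$ in $\cal M[A_\chi](s)=\int_0^\infty A_\chi(v)v^{s-1}\,dv$, together with an interchange of integrations, gives $\cal M[A_\chi](s)=\cal M[f_\chi](s)\,\cal M[f_{\b\chi}](1-s)$; by \eqref{mellin-g-L} this equals $\Gamma(s)L(s,\chi)\,\Gamma(1-s)L(1-s,\b\chi)=Q_\chi(s)$, and the factorisation through $\pi/\sin(\pi s)$ is the reflection formula for $\Gamma$. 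The interchange is legitimate on $0<\re(s)<1$ by Lemma \ref{f-chi}: since $f_\chi$ is bounded near $0$ and $O(e^{-x})$ at infinity, $\int_0^\infty\!\!\int_0^\infty |f_\chi(u)|\,|f_{\b\chi}(w)|\,u^{\sigma-1}w^{-\sigma}\,du\,dw<\infty$ precisely when $0<\sigma<1$.

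Next I would invert the Mellin transform along the central line. Since $\chi$ is primitive with $q>1$, $L(\cdot,\chi)$ and $L(\cdot,\b\chi)$ are entire, so $Q_\chi$ is holomorphic on the strip; by Stirling $\Gamma(s)\Gamma(1-s)$ decays like $e^{-\pi|t|}$ on vertical lines, while $L(\tfrac12+it,\cdot)$ grows only polynomially in $t$ (convexity bound, e.g. \cite[Chap.~12]{Dav80}), so $Q_\chi(\sigma+it)$ decays exponentially in $t$, uniformly for $\sigma$ in compact subsets of $(0,1)$. Since $A_\chi$ is continuous on $(0,\infty)$ by Lemma \ref{f-chi}, Mellin inversion gives $A_\chi(v)=\frac1{2\pi i}\int_{\re(s)=1/2}Q_\chi(s)v^{-s}\,ds$ for $v>0$. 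On $s=\tfrac12+it$ the reflection formula gives $|\Gamma(\tfrac12+it)|^2=\pi/\cosh(\pi t)$, and $\overline{L(\tfrac12+it,\chi)}=L(\tfrac12-it,\b\chi)$ gives $Q_\chi(\tfrac12+it)=\frac{\pi}{\cosh(\pi t)}|L(\tfrac12+it,\chi)|^2$; substituting this and changing $t\mapsto-t$ (which swaps $\chi$ and $\b\chi$ inside the modulus) yields the asserted integral for $A_\chi$, and hence for $\scr R_\chi(w)=e^{w/2}A_\chi(e^w)$ after the substitution $v=e^w$.

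Finally I would read off the analytic continuations from these representations. For $z\in\C'$ the principal logarithm satisfies $|\im(\log z)|<\pi$, so on any compact $K\subset\C'$ there is $\delta>0$ with $|\im(\log z)|\le\pi-\delta$, whence $|z^{-1/2+it}|\ll_K e^{(\pi-\delta)|t|}$; combined with the factor $1/\cosh(\pi t)$ and the polynomial bound for $|L(\tfrac12+it,\b\chi)|^2$, the integrand is dominated on $K$ by a polynomial in $|t|$ times $e^{-\delta|t|}$. Hence the integral converges locally uniformly and defines a holomorphic function on $\C'$ which agrees with $A_\chi$ on $(0,\infty)$ by the previous step, so it is the analytic continuation; the same estimate with $|\im w|<\pi$ handles $\scr R_\chi$ on $\cal W$, and $A_\chi(z)=z^{-1/2}\scr R_\chi(\log z)$ follows from $z=e^{\log z}$. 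The main point requiring care — and the step I expect to be the real obstacle — is precisely this boundary estimate: as $\im(\log z)\to\pm\pi$ the exponential decay from $1/\cosh(\pi t)$ is exactly cancelled while the polynomial growth of $|L|^2$ survives, so the representation cannot be extended across the cut $(-\infty,0]$, which explains why the natural domains are $\C'$ and $\cal W$ and not all of $\C$.
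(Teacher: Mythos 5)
Your proof is correct and follows essentially the same route as the paper: the Mellin convolution identity for \eqref{mellinA}, followed by inversion along the critical line (the paper phrases this as Fourier inversion of $\scr R_\chi$ using $\cal F[\scr R_\chi](t)=\cal M[A_\chi](\tfrac12-it)$, which is the same computation after the substitution $v=e^w$), and the analytic continuation read off from locally uniform convergence of the resulting integral. Note only that your inversion correctly produces a factor $\tfrac12$ in front of the integral representation of $A_\chi$, consistent with $A_\chi(z)=z^{-1/2}\scr R_\chi(\log z)$; the final display in the lemma's statement omits this $\tfrac12$, which appears to be a typo in the paper rather than an issue with your argument.
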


\begin{proof}
The relevant integrability conditions can be checked as in \cite{DN24}.
For $0<\mathfrak{R}(s)<1$, writing $-s=(1-s)-1$ and using (\ref{mellin-g-L}), we can write
\begin{eqnarray*}
\cal M [A_\chi](s) & = & \int_0^\infty \int_0^\infty f_\chi(xv)v^{s-1}dv\  f_{\b \chi}(x)dx \\
    & = & \int_0^\infty f_\chi(v)v^{s-1}dv \int_0^\infty f_{\b \chi}(x) x^{-s}dx \ = \ Q_\chi(s).
\end{eqnarray*}
Moreover, using the change of variable $y=e^x$, we obtain 
\begin{eqnarray*}
  \cal F [\scr R_\chi](t) = \int_{-\infty}^{\infty} e^{-i t x} 
e^{x/2} A_\chi(e^x) dx =  \int_0^{\infty} y^{1/2-it} A_\chi(y) \frac{dy}{y} = \cal M[A_\chi]\left(\frac{1}{2} - it\right).
\end{eqnarray*}
Therefore, (\ref{mellinA}) and Euler's reflection formula yield:
\begin{eqnarray} \label{fourierB}
\cal F [\scr R_\chi](t) & = & \left|L\left(\frac{1}{2}+it,\b \chi\right)\right|^{2} \frac{\pi}{\cosh(\pi t)}, \quad t\in\R.
\end{eqnarray}
We apply the inverse Fourier transform to (\ref{fourierB}) to obtain $\scr R_\chi$ and its analytic continuation to $\cal W$. Finally, we write $A_{\chi}(y)=y^{-1/2}\scr R_\chi(\log y)$, for $y>0$, and note that $\log(\C')=\cal W$.
\end{proof}

\bigskip

\subsection{A relation between $A_\chi$ and $\psi_\chi$} \label{bettin-conrey}

Recall that for $\im(z)>0$, 
\begin{eqnarray*}
E_\chi(z) & = & \frac{1}{\tau(\chi)} \sum_{n=1}^\infty\chi(n)d(n)e\left(nz/q\right) \\
\psi_\chi(z) & = & E_\chi(z) - \frac{\chi(-1)}{z}\ E_{\b\chi}\left(-\frac{1}{z}\right),
\end{eqnarray*}
where $\chi$ is a primitive character (see \cite[p.318]{Ten22} for the general condition for which $\tau(\chi)\neq 0$).

We basically use the same tricks as in the proof of \cite[Lemma 1]{BC13b} (see also \cite{DN24}), carefully keeping track of $\chi$ and its modulus $q$, to prove the following generalization.

\begin{lemm} \label{fund-lemma}
Let $\chi$ be a primitive Dirichlet character modulo $q>1$.\\
The analytic continuations of $A_\chi$ and $\psi_\chi$ satisfy : 
\begin{eqnarray*}
  A_\chi(z) & = & -\chi(-1) i\pi\ \psi_\chi(z), \quad z \in \mathbb{C}'.
\end{eqnarray*}
\end{lemm}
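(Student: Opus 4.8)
The plan is to establish the identity on the positive real axis first, where everything converges absolutely, and then extend to $\C'$ by analytic continuation, since Lemma~\ref{Achi-Bchi} already gives that $A_\chi$ is analytic on $\C'$ and the hypotheses of the excerpt (from Section~\ref{sec-gene}) give that $\psi_\chi$ is too. For $v>0$, I would start from the definition $A_\chi(v)=\int_0^\infty f_\chi(xv)f_{\b\chi}(x)\,dx$ and split the range of integration at $1$ (or at $1/\sqrt v$), using the functional equation of $f_\chi$ under $x\mapsto 1/x$ that follows from Poisson summation, namely the relation between $f_\chi$ and $f_{\b\chi}(1/\cdot)$ together with the Gauss-sum factor $\tau(\chi)$. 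The point of conjugating one factor in the definition of $A_\chi$, and of inserting $\tau(\chi)^{-1}$ and $\chi(-1)$ into the definition of $E_\chi$, is exactly so that these twisting factors telescope and one is left with the clean constant $-\chi(-1)i\pi$.

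The key computational step is to recognize the generating function for $\sum_n \chi(n)d(n)e(nz/q)$ inside $A_\chi$. Concretely, expanding $f_\chi(xv)=\sum_{m\ge1}\chi(m)e^{-mxv}$ and $f_{\b\chi}(x)=\sum_{n\ge1}\b\chi(n)e^{-nx}$ and integrating term by term gives $A_\chi(v)=\sum_{m,n\ge1}\chi(m)\b\chi(n)/(mv+n)$; collecting by the product $mn$, or rather by applying the standard partial-fraction / Estermann-type manipulation used in \cite{BC13b}, produces a combination of $E_\chi$ evaluated at a point of $\HH$ obtained from $v$ (after an appropriate rotation $v\mapsto$ pure imaginary or $v=e^x$), plus the reflected term $E_{\b\chi}(-1/z)/z$. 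This is where Ramanujan's/Bettin--Conrey's trick of writing $1/(mv+n)=\int_0^\infty e^{-(mv+n)t}\,dt$ and then resumming, or alternatively the contour-integral representation of $\Gamma(s)L(s,\chi)\Gamma(1-s)L(1-s,\b\chi)$ from Lemma~\ref{Achi-Bchi} combined with the Mellin transform of $E_\chi$, gets used. Either route realizes $A_\chi$ as $-\chi(-1)i\pi$ times $E_\chi(z)-\chi(-1)z^{-1}E_{\b\chi}(-1/z)=\psi_\chi(z)$.

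I would carry this out in the following order: (1)~fix $v>0$, write $A_\chi(v)$ as the absolutely convergent double sum $\sum_{m,n}\chi(m)\b\chi(n)/(mv+n)$; (2)~perform the partial-fraction resummation to isolate a term of shape $E_\chi$ and a term of shape $E_{\b\chi}(-1/\cdot)$, tracking the Gauss sum $\tau(\chi)$ via $\tau(n,\chi)=\b\chi(n)\tau(\chi)$ and the relation $\b{\tau(\chi)}=\chi(-1)\tau(\b\chi)$ to get the constant and the factor $\chi(-1)$ exactly right; (3)~identify the result with $-\chi(-1)i\pi\,\psi_\chi$ at the corresponding point of $\C'$; (4)~invoke analyticity of both sides on $\C'$ (Lemma~\ref{Achi-Bchi} for $A_\chi$, and the continuation result for $\psi_\chi$ established earlier in Section~\ref{sec-gene}) and the identity theorem to extend from $\R_{>0}$ to all of $\C'$. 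The main obstacle I expect is step~(2): making the partial-fraction/Estermann manipulation rigorous requires care with the conditional convergence of the rearranged series and with the branch of the logarithm / choice of half-plane when passing from $v>0$ to $z\in\HH$, and it is precisely here that one must check that the $\tau(\chi)^{-1}$, $\b\chi$, and $\chi(-1)$ in the definitions \eqref{Echi}--\eqref{psichi} conspire to cancel — a purely bookkeeping but error-prone matter, which is why the authors say they follow \cite{BC13b} ``carefully keeping track of $\chi$ and its modulus $q$.''
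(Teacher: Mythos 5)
Your overall architecture (prove the identity on a one-dimensional set, then extend by the identity theorem using the analyticity of $A_\chi$ from Lemma~\ref{Achi-Bchi}) is reasonable, and the alternative you mention in passing --- going through the contour/Mellin representation of $Q_\chi(s)=\Gamma(s)L(s,\chi)\Gamma(1-s)L(1-s,\b\chi)$ and the functional equation --- is in fact the route the paper takes. But your primary route, steps (1)--(3), has a genuine gap. First, the double sum $\sum_{m,n}\chi(m)\b\chi(n)/(mv+n)$ is \emph{not} absolutely convergent (the sum of the moduli diverges like $\sum_m\sum_n 1/(mv+n)$), so already step (1) requires a grouping argument rather than Fubini. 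More seriously, step (2) cannot terminate where you want it to: for $v\in\R_{>0}$ the series defining $E_\chi$ and $E_{\b\chi}(-1/\cdot)$ diverge (the terms $\chi(n)d(n)e(nv/q)$ do not tend to $0$), and the individual Eisenstein series do \emph{not} continue to the positive real axis --- only the combination $\psi_\chi$ does. So there is nothing of the shape ``$E_\chi$ evaluated at a point of $\HH$ obtained from $v$'' to recognize: the identity relates $A_\chi$ and $\psi_\chi$ at the \emph{same} point, and on $\R_{>0}$ the right-hand side exists only as an analytic continuation whose construction is essentially the content of the lemma. Your step (4) then assumes that continuation was ``established earlier in Section~\ref{sec-gene}'', but in the paper's logic it is obtained \emph{from} this very identity (via the continuation of $A_\chi$), so as written the argument is circular. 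The partial-fraction/Estermann resummation is not a citable black box here; making it rigorous amounts to redoing the contour computation.

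For comparison, the paper works on $\HH$, where the series for $E_\chi$ converges: it writes $E_\chi(z)=\frac{\tau(\chi)^{-1}}{2\pi i}\int_{(2)}\Gamma(s)L(s,\chi)^2(-2\pi iz/q)^{-s}\,ds$, applies the functional equation to convert the integrand into $\sin(\pi(s+\delta)/2)e^{\pi i s/2}Q_\chi(s)z^{-s}$ up to constants, shifts the contour to $\re(s)=\tfrac12$, handles $-z^{-1}E_{\b\chi}(-1/z)$ by the substitution $s\mapsto 1-s$ (with the branch bookkeeping $(-1/z)^s=e^{\pi is}z^{-s}$), and checks that the two trigonometric weights add up to the constant $i(-i)^\delta$; the resulting integral $\frac{1}{2\pi i}\int_{(1/2)}Q_\chi(s)z^{-s}\,ds$ is identified with $A_\chi(z)$ through Lemma~\ref{Achi-Bchi}, and the extension to $\C'$ comes from the continuation of $A_\chi$, not of the separate $E$'s. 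The cancellation of the $\tau(\chi)$, $\b\chi$ and $\chi(-1)$ factors that you flag as the delicate bookkeeping happens inside the functional equation, not in a series rearrangement. If you want to keep a ``compute on a ray, then continue'' structure, the workable version is to take Mellin transforms along $i\R_{>0}$ (compute $\int_0^\infty E_\chi(it)t^{s-1}\,dt$ and compare with $\cal M[A_\chi]=Q_\chi$), which again passes through the functional equation rather than through partial fractions.
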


\begin{proof}
Let $z\in\HH$, then $-iz=e^{-i\pi/2}z\in\re_{>0}$ and $|\arg(-2\pi i z)|<\pi/2$. 
Using the expansion 
\begin{eqnarray*}
L^2(s,\chi) & = & \sum_{n= 1}^\infty\frac{\chi(n)d(n)}{n^s},
\end{eqnarray*}
and the functional equation 
\begin{eqnarray*}
L(s,\chi) & = & i^{-\delta} \tau(\chi) 2^s \pi^{s-1}q^{-s} \sin(\pi (s+\delta)/2)\Gamma(1-s)L(1-s, \b\chi),
\end{eqnarray*}
where $\chi(-1)=(-1)^{\delta}$, we obtain
\begin{eqnarray*}
    E_\chi(z) & = & \frac{\tau(\chi)^{-1}}{ 2\pi i} \int_{(2)} \Gamma(s) L(s,\chi)^2 (-2\pi i z/q)^{-s} ds \\
        & = & \frac{i^{-\delta}}{ 2\pi i} \int_{(2)} \Gamma(s) L(s,\chi) 2^s \pi^{s-1}q^{-s} \sin(\pi (s+\delta)/2)\Gamma(1-s)L(1-s,\b\chi)(-2\pi i z/q)^{-s} ds\\ 
        & = & \frac{i^{-\delta}}{ 2\pi^2 i} \int_{(2)}  \sin(\pi (s+\delta)/2) e^{\pi i s/2} Q_\chi(s) z^{-s} ds. 
\end{eqnarray*}
We recall that $Q_\chi(s) = \Gamma(s) L(s,\chi) \Gamma(1-s) L(1-s,\b \chi)$ and we set
\begin{eqnarray*}
    \scr Q_\chi(s,z) & = & \sin(\pi (s+\delta)/2) e^{\pi i s/2} Q_\chi(s) z^{-s}.
\end{eqnarray*}
Since $\scr Q_\chi(\cdot,z)$ has exponential decay at $\pm i \infty$, and $\scr Q_\chi(\cdot,z)$ has no poles in $\re(s)>0$ (which can be seen directly in the previous first expression of $E_\chi(z)$), we can move the line of integration to $(1/2)$ and get
\begin{eqnarray*}
    E_\chi(z)   & = &  \frac{i^{-\delta}}{ 2\pi^2 i} \int_{(1/2)} \scr Q_\chi(s,z) ds.
\end{eqnarray*}
Therefore, for $z \in \mathbb{H}$, i.e. $0<\arg z<\pi$,
\begin{eqnarray*}
    - \frac{1}{z}\ E_{\b\chi}\left(-\frac{1}{z}\right) & = &  -\frac{1}{z}\cdot \frac{i^{-\delta}}{ 2\pi^2 i} \int_{(1/2)} \scr Q_{\b \chi}(s,-1/z) ds.
\end{eqnarray*}
We have, using the change of variable $s\to 1-s$,
\begin{eqnarray*}
    -\frac{1}{z}\int_{(1/2)} \scr Q_{\b \chi}(s,-1/z) ds & = & -\frac{1}{z}\int_{(1/2)} Q_{\b\chi}(s) \sin(\pi (s+\delta)/2) e^{\pi i s/2} (-1/z)^{-s} ds \\
        & = & \int_{(1/2)} Q_{\b\chi}(1-s) \sin(\pi (1-s+\delta)/2) e^{\pi i (1-s)/2} (-1/z)^{s} ds
\end{eqnarray*}
But $Q_{\b\chi}(1-s)=Q_\chi(s)$ and $$\sin(\pi (1-s+\delta)/2)=\cos(\pi (s-\delta)/2).$$ 
Moreover $(-1/z)^s=e^{\pi i s}z^{-s}$ since $\arg(-1/z)=\pi -\arg z$, and then $e^{\pi i (1-s)/2} e^{\pi i s} = i e^{\pi i s/2}$. Therefore
\begin{eqnarray*}
    - \frac{1}{z}\ E_{\b\chi}(-\frac{1}{z}) & = &  \frac{i^{-\delta}}{ 2\pi^2 i} \int_{(1/2)}  \cos(\pi (s-\delta)/2)\ i  e^{\pi i s/2}  Q_\chi(s) z^{-s} ds.
\end{eqnarray*}
We now compute
\begin{eqnarray*}
\sin(\frac{\pi (s+\delta)}{2}) e^{i\frac{\pi s}{2}}+ (-1)^\delta i \cos(\frac{\pi (s-\delta)}{2}) e^{i\frac{\pi s}{2}} & = & 
\frac{e^{i\frac{\pi (s+\delta)}{2}}-e^{-i\frac{\pi (s+\delta)}{2}}}{2i} e^{i\frac{\pi s}{2}} +  (-1)^\delta i \frac{e^{i\frac{\pi (s-\delta)}{2}} + e^{-i\frac{\pi (s-\delta)}{2}}}{2} e^{i\frac{\pi s}{2}} \\
    & = & -\frac{i}{2} (e^{i\pi s} e^{i\frac{\pi \delta}{2}} - e^{-i\frac{\pi \delta}{2}}) + (-1)^\delta \frac{i}{2} (e^{i\pi s} e^{-i\frac{\pi \delta}{2}} + e^{i\frac{\pi \delta}{2}})\\
    & = & \frac{i}{2} e^{i\pi s} \left((-1)^\delta e^{-i\frac{\pi \delta}{2}} -e^{i\frac{\pi \delta}{2}}\right)  + \frac{i}{2} (e^{-i\frac{\pi \delta}{2}} + (-1)^\delta e^{+i\frac{\pi \delta}{2}})\\
    & = & i (-i)^\delta,
\end{eqnarray*}
where we noted that, either if $\delta=0$ or $1$, then $(-1)^\delta e^{-i\frac{\pi \delta}{2}} -e^{i\frac{\pi \delta}{2}}=0$.
We then obtain
\begin{eqnarray*}
\psi_\chi(z)  =  E_\chi(z) - \frac{(-1)^\delta}{z}\ E_{\b \chi}\left(-\frac{1}{z}\right) = 
    \frac{i^{-\delta}}{ 2\pi^2 i} \int_{(1/2)} i (-i)^\delta Q_\chi(s) z^{-s} ds = \frac{i\chi(-1)}{\pi} A_\chi(z),
\end{eqnarray*}
and we conclude due to Lemma \ref{Achi-Bchi}.
\end{proof}

Another proof can be performed, in the same vein of \cite[p.22]{LZ19}, computing directly the Mellin transform of $t\mapsto E_\chi(it)$. To do so, we would need to study the behavior at $0$. Writing, for $t>0$,
\begin{eqnarray*}
  \tau(\chi) E_{\chi}(it) & = & \sum_{n = 1}^\infty \chi(n) \sum_{b = 1}^\infty \chi(b) e^{-b\cdot 2 \pi t\ n/q} \\  
  &=& \sum_{n = 1}^\infty \chi(n) f_{\chi}(2 \pi n t/q) \\ 
  &=& \sum_{a = 1}^{q-1}\chi(a) \sum_{n = 0}^\infty f_{\chi}(2 \pi (n + a/q)t),
\end{eqnarray*}
we can apply the shifted
Euler-Maclaurin summation formula of Theorem \ref{main-shift} to the function $x\mapsto f_\chi(2\pi x)$ with shift $a/q$, using the expansion of $f_\chi$ given by (\ref{dev-fchi}).

We then remark that $t\mapsto E_\chi(it)$ has a continuous extension at $0$ since
\[
\tau(\chi) E_{\chi}(i t) = \frac{c_{-1}}{t} + c_{0} +o(t), \quad t \to 0^+,
\]
where
\begin{eqnarray*}
c_{-1} = \sum_{a = 1}^{q-1} \chi(a) \int_{0}^\infty f_{\chi}(t) \frac{dt}{2\pi} = 0.
\end{eqnarray*}

\bigskip

\section{Proof of Theorem \ref{deriv-psi}} \label{sec-proof-th2}

The expansion of
\begin{eqnarray*}
    \psi_\chi(1+it) & =  & E_{\chi}(1+it) - \frac{\chi(-1)}{1+it} E_{\b \chi}\left(-\frac{1}{1+it}\right)
\end{eqnarray*}
as $t\to 0^+$ will provide the derivatives $\psi_\chi^{(n)}(1)$ at any order.

We need to compute $\d E_{\chi}(1+it)$ and $$E_{\b \chi}\left(-\frac{1}{1+it}\right) = E_{\b \chi}\left(-1+i\frac{t}{1+it}\right).$$
To do so, we study $ E_{\chi}(\pm1 + iw)$ with $w\in\re_{>0}$. First let us prove the following decomposition.

\begin{lemm}
We have for all $w\in\re_{>0}$:
\[
 E_{\chi}(\pm1+ iw) = \frac{1}{\tau(\chi)} \sum_{j = 1}^{q-1} \chi(j) \sum_{k \geq 0} \cal E_{\chi, \pm j}((j/q + k)\ iw),
\]
where $\d \cal E_{\chi, \pm j}(z) = \sum_{b \geq 1} \chi(b) 
e(\pm bj/q) e\left( b z\right)$ for $z\in\HH$.
\end{lemm}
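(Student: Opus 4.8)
The plan is to expand the divisor function $d(n)=\sum_{ab=n}1$ inside the definition of $E_\chi$, use the complete multiplicativity of $\chi$, and then regroup the resulting double sum according to the residue class of one summation variable modulo $q$. Fix $w\in\re_{>0}$, write $w=u+iv$ with $u>0$, and set $z=\pm1+iw$; then $\im(z)=\re(w)=u>0$, so $z\in\HH$ and the defining series $\tau(\chi)E_\chi(z)=\sum_{n\ge1}\chi(n)d(n)e(nz/q)$ converges absolutely, since $|e(nz/q)|=e^{-2\pi nu/q}$ and $d(n)=O(n^{\varepsilon})$. In fact one has the cleaner majorant $\sum_{a,b\ge1}|\chi(a)\chi(b)e(ab z/q)|\le\sum_{a,b\ge1}e^{-2\pi abu/q}<\infty$ (sum over $b$ first, then over $a$), and this absolute summability is what licenses every rearrangement below.

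First I would write, using $\chi(ab)=\chi(a)\chi(b)$ and absolute convergence,
\[
\tau(\chi)E_\chi(\pm1+iw)=\sum_{n\ge1}\chi(n)d(n)\,e\!\left(\frac{n(\pm1+iw)}{q}\right)=\sum_{a\ge1}\sum_{b\ge1}\chi(a)\chi(b)\,e\!\left(\frac{\pm ab}{q}\right)e\!\left(\frac{ab\,iw}{q}\right).
\]
Next I would split the $a$-summation by residues: the positive integers $a$ with $q\nmid a$ are exactly those of the form $a=j+kq$ with $1\le j\le q-1$ and $k\ge0$, while the terms with $q\mid a$ vanish because $\chi(a)=0$ (here $q>1$). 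For $a=j+kq$ one has $\chi(a)=\chi(j)$ by periodicity, $e(\pm ab/q)=e(\pm jb/q)\,e(\pm kb)=e(\pm jb/q)$ since $kb\in\Z$, and $ab\,iw/q=b(j/q+k)\,iw$. Substituting and reordering the (absolutely convergent) triple sum gives
\[
\tau(\chi)E_\chi(\pm1+iw)=\sum_{j=1}^{q-1}\chi(j)\sum_{k\ge0}\sum_{b\ge1}\chi(b)\,e\!\left(\pm\frac{jb}{q}\right)e\bigl(b(j/q+k)\,iw\bigr).
\]

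Finally I would observe that for each $j\in\{1,\dots,q-1\}$ and $k\ge0$ the point $z_{j,k}:=(j/q+k)\,iw$ lies in $\HH$, since $\im(z_{j,k})=(j/q+k)\re(w)>0$; hence the innermost sum over $b$ is precisely $\cal E_{\chi,\pm j}(z_{j,k})$ by the definition $\cal E_{\chi,\pm j}(z)=\sum_{b\ge1}\chi(b)e(\pm bj/q)e(bz)$, and dividing by $\tau(\chi)$ yields the asserted identity. There is no genuine obstacle here: the only point requiring care is the justification of the two rearrangements (the double sum over $(a,b)$ and the triple sum over $(j,k,b)$), which is furnished once and for all by the uniform bound $|\chi(a)\chi(b)e(abz/q)|\le e^{-2\pi abu/q}$ together with $\sum_{a,b\ge1}e^{-2\pi abu/q}<\infty$.
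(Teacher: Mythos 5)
Your proof is correct and follows essentially the same route as the paper's: expand $d(n)$ as a sum over factorizations, split one factor into residue classes $a=j+kq$ modulo $q$ (the multiples of $q$ dropping out since $\chi$ vanishes there), and absorb the shift $\pm1$ into the factor $e(\pm jb/q)$. The only difference is that you spell out the absolute-convergence justification for the rearrangements, which the paper leaves implicit.
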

\begin{proof}
First, using the definition of the divisor function and summing according to congruence classes $\mod q$, we can write:
\begin{eqnarray*}
\tau(\chi) E_{\chi}(z) 
& = &  \sum_{m \geq 1} \chi(m) \sum_{b \geq 1} \chi(b) e\left( b m z/q\right) \\
& = & \sum_{j = 1}^{q -1 } \sum_{\substack{m \geq 1\\ m \equiv j\mod q}} \chi(m) \sum_{b \geq 1} \chi(b) e\left( b m z/q\right)\\
& = & \sum_{j = 1}^{q -1 } \chi(j) \sum_{k \geq 0} \sum_{b \geq 1} \chi(b) 
e\left( b(j + k q) z/q\right).
\end{eqnarray*}
Therefore
$\d \ \tau(\chi) E_{\chi}(z \pm1 ) = \sum_{j = 1}^{q -1 } \chi(j) \sum_{k \geq 0} \sum_{b \geq 1} \chi(b) 
e(\pm b j/q)\  e\left( b(j/q + k ) z\right)$,
as claimed.
\end{proof}

We then need to get the expansion of $\d \sum_{k \geq 0} \cal E_{\chi, r}(i(j/q + k)w)$, which is the purpose of the following.

\begin{lemm} 
We have for all $\pm r\in\{1,\ldots,q-1\}$, and $w\in\re_{>0}$ near $0$,
\begin{eqnarray*}
 \sum_{k \geq 0} \cal E_{\chi, r}(i(j/q + k)w) & = & -\frac{\tau( r,\chi)}{2 \pi q} \frac{\log(w)}{w}\\
 &  & + \frac{1}{w} \left( \frac{\tau( r,\chi)}{2 \pi q} K(j/q) + I_\chi(r) \right)  \\
    &  & + \sum_{n = 0}^N \frac{B_{n+1}(j/q)}{(n+1)} c_{n,\chi}(r) w^n + O(w^N),
\end{eqnarray*}
where $\d I_{\chi}(r) = \int_{0}^\infty \left(\cal E_{\chi, r}(ix) - \frac{\tau( r,\chi)}{2 \pi q} \frac{e^{-x}}{x}\right) d x$, $\d \ K(\alpha)=- \gamma- \frac{\Gamma'(\alpha)}{\Gamma(\alpha)}$, and
\[
c_{n,\chi}(r) = \frac{(-2 \pi q)^n}{(n+1)!} \sum_{a = 1}^{q-1} \chi(a)e\left( r a /q \right) B_{n+1}(a/q).
\]
\end{lemm}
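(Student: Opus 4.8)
The plan is to obtain the asymptotic expansion of $\sum_{k\geq 0}\cal E_{\chi,r}(i(j/q+k)w)$ as $w\to 0^+$ by applying the shifted Euler--Maclaurin summation formula of Theorem \ref{main-shift} to a suitable single-variable function, and then to identify the resulting coefficients with the closed forms in the statement. The natural candidate is the function
\[
g(x) = \cal E_{\chi,r}(ix) = \sum_{b\geq 1}\chi(b)\,e(\pm bj/q)\,e^{-bx}, \qquad x>0,
\]
so that the sum in question is $\sum_{k\geq 0} g((j/q+k)w)$, precisely a sum of the type $\sum_{k} g((k+\alpha)w)$ with shift $\alpha=j/q\in(0,1)$ and small parameter $w$. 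First I would record the behaviour of $g$ at $0$ and at $+\infty$: at infinity $g(x)=O(e^{-x})$ from the defining series (as for $f_\chi$ in Lemma \ref{f-chi}); near $0$, using the identity $g(x)=\sum_{a=1}^{q-1}\chi(a)e(\pm ra/q) f_{\text{(something)}}$-type grouping — more directly, since $\sum_{b}\chi(b)e(\pm bj/q)e^{-bx}$ has a pole coming from the constant term in the character sum, one gets $g(x) = \dfrac{c_{-1}(r)}{x} + \big(\text{analytic}\big)$ where the residue is $c_{-1}(r) = \tfrac{1}{2\pi q}\,\tau(r,\chi)$ after summing the geometric-type series and using $\tau(r,\chi)=\sum_a\chi(a)\xi^{ar}$; the Taylor coefficients of the analytic part will be expressed via Bernoulli polynomials $B_{n+1}(a/q)$, matching the shape of $c_{n,\chi}(r)$.

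The second step is to feed this into the shifted Euler--Maclaurin formula. Because $g$ has a simple pole at $0$ (it is not smooth at the origin), the formula produces, besides the regular power series $\sum_{n} \frac{B_{n+1}(\alpha)}{n+1} g^{(n)}(0^+)\,w^n$-type terms, a main singular contribution of the form $\frac{1}{w}\int_0^\infty g$ (suitably regularised) together with a logarithmic term $\frac{\log w}{w}$ whose coefficient is exactly the residue $c_{-1}(r)$ times a universal constant; this is the standard mechanism by which $\sum_k g((k+\alpha)w)\sim \frac{1}{w}\int_0^\infty g(x)\,dx + \cdots$ acquires a $\log w$ when $g\sim c_{-1}/x$. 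Concretely I would split $g(x) = \frac{\tau(r,\chi)}{2\pi q}\frac{e^{-x}}{x} + h(x)$ with $h$ continuous at $0$ and $O(e^{-x})$ at infinity (this is why $I_\chi(r)$ is defined with exactly that subtraction), apply the smooth shifted Euler--Maclaurin expansion to $\sum_k h((k+\alpha)w)$ — giving the $\frac{1}{w}\int_0^\infty h = \frac{1}{w}\big(I_\chi(r) + \tfrac{\tau(r,\chi)}{2\pi q}K(j/q)\big)$ term plus the $\sum_n \frac{B_{n+1}(j/q)}{n+1}c_{n,\chi}(r) w^n$ tail — and handle $\sum_k \frac{\tau(r,\chi)}{2\pi q}\frac{e^{-(k+\alpha)w}}{(k+\alpha)w}$ separately, where the classical Hurwitz-type identity $\sum_{k\geq 0}\frac{e^{-(k+\alpha)w}}{k+\alpha} = -\log w - \frac{\Gamma'(\alpha)}{\Gamma(\alpha)} - \gamma + O(w)$ produces both the $\frac{\log w}{w}$ term and the $K(\alpha)=-\gamma-\frac{\Gamma'(\alpha)}{\Gamma(\alpha)}$ contribution. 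Combining the two pieces yields exactly the claimed three-line expansion.

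The main obstacle I anticipate is justifying the application of the shifted Euler--Maclaurin formula to $g$ (or rather to $h$) uniformly enough to get a genuine asymptotic expansion with $O(w^N)$ error: one must check that $h$ and its derivatives decay fast enough at $+\infty$ (true, since everything is $O(e^{-x})$ together with its derivatives), that the remainder integral in the Euler--Maclaurin formula is $O(w^N)$, and — the slightly delicate point — that interchanging the sum over $b$ (inside $\cal E_{\chi,r}$) with the Euler--Maclaurin operations is legitimate, or alternatively that one works directly with the closed analytic form of $g$ near $0$ given by a Bernoulli-type generating function. The identification of the coefficient $c_{n,\chi}(r)$ in the form $\frac{(-2\pi q)^n}{(n+1)!}\sum_a\chi(a)e(ra/q)B_{n+1}(a/q)$ then amounts to a bookkeeping computation: expand $h$ near $0$ using the known expansion of $f_\chi$ from \eqref{dev-fchi} (after regrouping $\cal E_{\chi,r}(ix)=\sum_a\chi(a)e(\pm ra/q)\sum_{m\geq 0}e^{-(a+qm)x}$ and summing the geometric series), and match Taylor coefficients, using the generating identity $\sum_{a}\chi(a)e(ra/q)\frac{xe^{(a/q)x}}{e^x-1}$-type manipulation that produces $B_{n+1}(a/q)$. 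I would present the residue computation and the Hurwitz-type lemma first, then the smooth Euler--Maclaurin step, then assemble.
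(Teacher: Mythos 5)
Your proposal is correct and follows essentially the same route as the paper: the heart of the argument is a single application of the shifted Euler--Maclaurin formula (Theorem \ref{main-shift}) to $x\mapsto \cal E_{\chi,r}(ix)$ with shift $\alpha=j/q$, after first establishing that this function has a simple pole at $0$ with residue $\tau(r,\chi)/(2\pi q)$ and Taylor coefficients $-c_{n,\chi}(r)$; the singular split $f=\CDSE{f}{-1}e^{-x}/x+h$ and the Hurwitz-type identity you describe are already packaged inside Theorem \ref{main-shift}, so you need not redo them. The only (harmless) divergence is in computing the Laurent expansion of $\cal E_{\chi,r}(iw)$ at $0$: the paper applies the shifted Euler--Maclaurin formula a second time, to $w\mapsto e^{-2\pi q w}$ with shift $a/q$, whereas you sum the geometric series and invoke the Bernoulli generating function --- both yield the same coefficients $B_{n+1}(a/q)$.
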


\begin{proof}
This quantity is of the form of the Euler-Maclaurin formula Theorem \ref{main-shift} with a shift $j/q$. So let us study the expansion of $\cal E_{\chi, r}(i w)$.

We write by definition, with $b=a+qc$,
\[
\cal E_{\chi,  r}(iw) = \sum_{b \geq 1} \chi(b) 
e( br/q) e\left( b iw\right) 
= \sum_{a = 1}^{q -1} \chi(a)  e\left( r a /q \right)  \sum_{c \geq 0} e\left( (a + qc)iw\right).
\]
First, we apply the shifted Euler-Maclaurin summation formula in Theorem \ref{main-shift} to the function $w\mapsto e^{-2 \pi q\ w}$ with shift $a/q$, to get:
\begin{eqnarray*}
\sum_{c \geq 0}  e^{-2 \pi (a/q + c)(qw)} & = & \frac{1}{2 \pi q\ w} - \sum_{m = 0}^N B_{m+1}(a/q) \frac{(-2 \pi q)^m}{(m+1)!} w^m +O(w^{N+1}),
\end{eqnarray*}
as $t \to 0^{+}$. Notice that $\d \tau(r,\chi)=\sum_{a = 1}^{q -1} \chi(a)  e\left( r a /q \right)$. Hence, summing over $a$, we get
\begin{eqnarray*}
\cal E_{\chi, r}(iw) & = &  
\frac{\tau( r,\chi)}{2 \pi q\ w} - \sum_{m = 0}^N c_{m,\chi}(r) w^m +O(w^{N+1}),
\end{eqnarray*}
where 
\begin{eqnarray*}
c_{m,\chi}( r) & = & \frac{(-2 \pi\ q)^m}{(m+1)!} \sum_{a = 1}^{q-1} \chi(a)e\left( r a /q \right) B_{m+1}(a/q).
\end{eqnarray*}
Second, we apply the shifted Euler-Maclaurin summation formula with shift $j/q$ and $w\mapsto \cal E_{\chi, r}(iw)$ to obtain the conclusion.
\end{proof}

We are now ready to prove the following expansion.
\begin{lemm} \label{Epm1}
We have for $w\in\re_{>0}$ near $0$:
\begin{eqnarray*}
 E_{\chi}(\pm1 +i w) 
    & = & - \chi(\pm 1) \left(\frac{\varphi(q)}{2\pi q}\frac{\log(w)}{w} + \frac{\kappa(q)}{w}\right)
     + \sum_{n = 0}^N C_{\pm1,\chi}(n)\ w^n + O(w^{N+1}),
\end{eqnarray*}
where 
\begin{eqnarray*}
C_{1,\chi}(n) & = & \frac{(-2 \pi q)^n}{\tau(\chi)(n+1)(n+1)!} \sum_{1\leq a,b \leq q-1} \chi(a b)\ \xi^{ab} B_{n+1}(a/q)B_{n+1}(b/q) \\
C_{-1,\chi}(n)  & = & (-1)^{n+1} \chi(-1) C_{1,\chi}(n),
\end{eqnarray*}
and, denoting by $\gamma$ the Euler constant and $\chi_0$ the principal character modulo $q$,
\[
\kappa(q)  = \gamma\frac{\varphi(q)}{2\pi q} + \frac{1}{2\pi q} \sum_{\substack{1\leq j\leq q\\(j,q)=1}}\frac{\Gamma'}{\Gamma}\left(\frac{j}{q}\right) - \int_0^\infty \left(f_{\chi_0}(2\pi x) - \frac{\varphi(q)}{2\pi q}\frac{e^{-x}}{x} \right)dx.
\]
\end{lemm}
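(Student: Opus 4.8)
The plan is to feed the two preceding lemmas into one another. Starting from the decomposition $E_\chi(\pm1+iw)=\tau(\chi)^{-1}\sum_{j=1}^{q-1}\chi(j)\sum_{k\geq0}\cal E_{\chi,\pm j}\big((j/q+k)iw\big)$, I would insert the asymptotic expansion of $\sum_{k\geq0}\cal E_{\chi,r}\big(i(j/q+k)w\big)$ with $r=\pm j$, and then sum the three resulting pieces (the $\log(w)/w$ term, the $1/w$ term, and the polynomial part) over $j$. The one algebraic fact used throughout is $\tau(r,\chi)=\b\chi(r)\tau(\chi)$ together with $\b\chi(\pm1)=\chi(\pm1)$, so that $\chi(j)\tau(\pm j,\chi)=\chi(\pm1)\tau(\chi)$ when $(j,q)=1$ and vanishes otherwise; in particular $\tau(\chi)^{-1}\sum_j\chi(j)\tau(\pm j,\chi)=\chi(\pm1)\varphi(q)$ by orthogonality of characters.

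For the singular part this immediately produces the coefficient $-\chi(\pm1)\varphi(q)/(2\pi q)$ of $\log(w)/w$, and the contribution $\chi(\pm1)(2\pi q)^{-1}\sum_{(j,q)=1}K(j/q)$ to the $1/w$ term; expanding $K(\alpha)=-\gamma-\Gamma'(\alpha)/\Gamma(\alpha)$ turns the latter into $-\chi(\pm1)\big[\gamma\varphi(q)/(2\pi q)+(2\pi q)^{-1}\sum_{(j,q)=1}(\Gamma'/\Gamma)(j/q)\big]$. The leftover piece of the $1/w$ term is $\tau(\chi)^{-1}\sum_j\chi(j)I_\chi(\pm j)$; since each $I_\chi(r)$ is an absolutely convergent integral, I would move the finite $j$-sum inside and recombine, using $\sum_j\chi(j)\cal E_{\chi,\pm j}(ix)=\sum_{b\geq1}\chi(b)\tau(\pm b,\chi)e^{-2\pi bx}=\chi(\pm1)\tau(\chi)f_{\chi_0}(2\pi x)$ and $\sum_j\chi(j)\tau(\pm j,\chi)=\chi(\pm1)\tau(\chi)\varphi(q)$ for the subtracted tails, so that $\tau(\chi)^{-1}\sum_j\chi(j)I_\chi(\pm j)=\chi(\pm1)\int_0^\infty\big(f_{\chi_0}(2\pi x)-\tfrac{\varphi(q)}{2\pi q}\tfrac{e^{-x}}{x}\big)dx$. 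Adding the three constants reproduces exactly $-\chi(\pm1)\kappa(q)$.

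For the polynomial part, the coefficient of $w^n$ is $\tau(\chi)^{-1}\sum_j\chi(j)\tfrac{B_{n+1}(j/q)}{n+1}c_{n,\chi}(\pm j)$; substituting the formula for $c_{n,\chi}$ gives $\tfrac{(-2\pi q)^n}{\tau(\chi)(n+1)(n+1)!}\sum_{1\leq a,b\leq q-1}\chi(ab)\,e(\pm ab/q)\,B_{n+1}(a/q)B_{n+1}(b/q)$. For the upper sign $e(ab/q)=\xi^{ab}$, which is the stated $C_{1,\chi}(n)$. For the lower sign I would apply the substitution $a\mapsto q-a$ (keeping $b$ fixed): using $\chi(q-a)=\chi(-1)\chi(a)$, the reflection $B_{n+1}(1-a/q)=(-1)^{n+1}B_{n+1}(a/q)$, and $e(-(q-a)b/q)=e(ab/q)=\xi^{ab}$, the sum turns into $(-1)^{n+1}\chi(-1)$ times the one for the upper sign, giving $C_{-1,\chi}(n)=(-1)^{n+1}\chi(-1)C_{1,\chi}(n)$.

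Finally, the error term: the expansion lemma yields $O(w^N)$ for each of the $\varphi(q)$ nonzero summands, hence $O(w^N)$ overall, and carrying the expansion one order further and relabelling upgrades this to $O(w^{N+1})$ as stated. I expect the only real obstacle to be the bookkeeping for the $1/w$ coefficient — correctly matching the $K(j/q)$-sum, the Euler constant $\gamma$, and the integral of $f_{\chi_0}$ against the three-term definition of $\kappa(q)$, and justifying the recombination of the finite $j$-sum with the integral defining $I_\chi$ (each $I_\chi(r)$ converges absolutely, even though the bare functions $\cal E_{\chi,\pm j}(ix)$ behave like $1/x$ near $0$).
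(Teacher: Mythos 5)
Your proposal is correct and follows essentially the same route as the paper: insert the expansion of $\sum_{k\geq0}\cal E_{\chi,\pm j}(i(j/q+k)w)$ into the decomposition of $E_\chi(\pm1+iw)$, collapse the singular terms via $\chi(j)\tau(\pm j,\chi)/\tau(\chi)=\chi(\pm1)$ for $(j,q)=1$, and obtain $C_{-1,\chi}$ from $C_{1,\chi}$ by the substitution $a\mapsto q-a$ with the Bernoulli reflection $B_{n+1}(1-x)=(-1)^{n+1}B_{n+1}(x)$. In fact you supply a detail the paper leaves as ``the conclusion by computing the term in $1/w$'', namely the identity $\sum_j\chi(j)\cal E_{\chi,\pm j}(ix)=\chi(\pm1)\tau(\chi)f_{\chi_0}(2\pi x)$ that recombines the $I_\chi(\pm j)$ into the single $f_{\chi_0}$ integral appearing in $\kappa(q)$.
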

\begin{proof}
Applying the two previous lemmas gives for $w\in\re_{>0}$ near $0$:
\begin{eqnarray*}
 E_{\chi}(\pm1 +i w)
    & = &  - \frac{1}{\tau(\chi)} \sum_{j = 1}^{q-1} \chi(j) \frac{\tau( \pm j,\chi)}{2 \pi q} \frac{\log(w)}{w}\\
    & &   + \frac{1}{w} \frac{1}{\tau(\chi)} \sum_{j = 1}^{q-1} \chi(j) 
    \left( \frac{\tau( \pm j,\chi)}{2 \pi q} K(j/q) + I_\chi(\pm j) \right)  \\
    & &  + \frac{1}{\tau(\chi)} \sum_{j = 1}^{q-1} \chi(j) \sum_{n = 0}^N \frac{B_{n+1}(j/q)}{(n+1)} c_{n,\chi}(\pm j) w^n + O(w^{N+1}).
\end{eqnarray*}
The equality $\d \chi(j) \frac{\tau( \pm j,\chi)}{\tau(\chi)}=\chi(\pm 1)$ if $(j,q)=0$, and $0$ if not, allows to simplify the first two sums.

From the expressions
\begin{eqnarray*}
c_{n,\chi}(r) & = & \frac{(-2 \pi q)^n}{(n+1)!} \sum_{a = 1}^{q-1} \chi(a)e\left( r a /q \right) B_{n+1}(a/q),
\end{eqnarray*}
and
\begin{eqnarray*}
    \sum_{a = 1}^{q-1} \chi(a)e\left( -r a /q \right) B_{n+1}(a/q) & = & \sum_{a = 1}^{q-1} \chi(q-a)e\left( -r (q-a) /q \right) B_{n+1}((q-a)/q) \\
        & = & (-1)^{n+1} \sum_{a = 1}^{q-1} \chi(-a)e\left( r a /q \right) B_{n+1}(a/q),
\end{eqnarray*}
we get $\chi(-1) c_{n,\chi}(-r) =(-1)^{n+1}c_{n,\chi}(r)$, and the conclusion by computing the term in $1/w$.
\end{proof}
\smallskip

We now finish the proof of Theorem \ref{deriv-psi}. From Lemma \ref{Epm1}, we can write for $t>0$ near $0$:
\begin{eqnarray*}
E_{\chi}(1+it) & = & -\frac{\varphi(q)}{2\pi q}\frac{\log(t)}{t} - \frac{\kappa(q)}{t}  + \sum_{n = 0}^N C_{1,\chi}(n) t^n + O(t^{N+1}),
\end{eqnarray*}
and
\begin{eqnarray*}
\frac{\chi(-1)}{1+it} E_{\b\chi}\left(-1+i\frac{t}{1+it}\right) & = & 
-\frac{\varphi(q)}{2\pi q}\frac{\log(t)}{t} + \frac{\varphi(q)}{2\pi q}\frac{\log(1+it)}{t} - \frac{\kappa(q)}{t} \\
    &  & + \sum_{n = 0}^N \b\chi(-1) C_{-1,\b \chi}(n) \frac{t^n}{(1+it)^{n+1}} + O(t^{N+1}).
\end{eqnarray*}
Summing, the divergent terms cancel since we know that $\psi_\chi$ is analytic near $1$. Therefore
\begin{eqnarray*}
\psi_\chi(1+it) & = & - \frac{\varphi(q)}{2 \pi q}\frac{\log(1+it)}{t} + \sum_{n = 0}^N (-1)^{n} C_{1,\b\chi}(n) \frac{t^n}{(1+it)^{n+1}}  + \sum_{n = 0}^N C_{1,\chi}(n) t^n + O(t^{N+1}).
\end{eqnarray*}
We now use the following series expansion near $0$:
\begin{eqnarray*}
\frac{1}{(1+it)^{n+1}} & = & \sum_{a= 0}^\infty {a+n \choose n} (-i)^a t^a \\
    \frac{\log(1+it)}{t} & = & \sum_{n=1}^\infty (-1)^{n+1} \frac{i^n}{n} t^{n-1},
\end{eqnarray*}
and switching the sums, we obtain the desired result by uniqueness of the expansion.

\bigskip

\section{Proof of Theorem \ref{main-th}} \label{sec-proof-th1}

First, note that we can use the same tools as in \cite{DH24}, especially the differential operator $\cal D:C^\infty(\R)\to C^\infty(\R), \cal D[ \varphi](x) = x\varphi'(x)$, \cite[Lemma 2.6]{DH24}, based on  \cite[Prop. 2.6(4)]{AIK14}.

Here, we identify in a natural way a specific non central Stirling number, see \cite{Kou82}, which simplifies the exposition in \cite{DH24}.

\begin{lemm}\label{compo-exp}
Let $\varphi\in C^{\infty}(\R)$ and $\phi(x)=e^{x/2}\varphi(e^x)$ for any $x\in\R$. Then, for all $n\geq 0$,
\begin{eqnarray*}
\phi^{(n)}(x) & = & \sum_{k=0}^n S_{-1/2}(n,k) e^{(k+1/2)x}\varphi^{(k)}(e^x).
\end{eqnarray*}
\end{lemm}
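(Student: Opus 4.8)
The plan is to argue by induction on $n$, differentiating the claimed identity and recognizing in the result the defining recurrence of the non-central Stirling numbers $S_{-1/2}$. For $n=0$ the statement reads $\phi(x)=S_{-1/2}(0,0)\,e^{x/2}\varphi(e^x)$, and $S_{-1/2}(0,0)=(-(-1/2))^{0}=1$ by definition, so the base case holds.

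For the inductive step, assume the formula for some $n\geq 0$ and differentiate term by term. By the product and chain rules,
\[
\frac{d}{dx}\Big(e^{(k+1/2)x}\varphi^{(k)}(e^x)\Big)=\big(k+\tfrac12\big)e^{(k+1/2)x}\varphi^{(k)}(e^x)+e^{(k+3/2)x}\varphi^{(k+1)}(e^x),
\]
so that
\[
\phi^{(n+1)}(x)=\sum_{k=0}^{n}S_{-1/2}(n,k)\big(k+\tfrac12\big)e^{(k+1/2)x}\varphi^{(k)}(e^x)+\sum_{k=0}^{n}S_{-1/2}(n,k)e^{(k+3/2)x}\varphi^{(k+1)}(e^x).
\]
Re-indexing the second sum via $k\mapsto k-1$ and collecting the coefficient of $e^{(k+1/2)x}\varphi^{(k)}(e^x)$ for each $0\leq k\leq n+1$ gives $S_{-1/2}(n,k-1)+(k+\tfrac12)S_{-1/2}(n,k)$, which is exactly $S_{-1/2}(n+1,k)$ by the recurrence $S_\alpha(n+1,k)=S_\alpha(n,k-1)+(k-\alpha)S_\alpha(n,k)$ with $\alpha=-1/2$. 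This completes the induction.

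The argument involves no genuine difficulty; the only point requiring care is the bookkeeping at the extreme indices $k=0$ and $k=n+1$, where one must check that the standard conventions $S_{-1/2}(n,-1)=0$ and $S_{-1/2}(n,n+1)=0$ are compatible with the recurrence (indeed one gets $S_{-1/2}(n+1,0)=\tfrac12 S_{-1/2}(n,0)=(1/2)^{n+1}$ and $S_{-1/2}(n+1,n+1)=S_{-1/2}(n,n)=1$, both consistent). Alternatively, the identity can be obtained operator-theoretically in the spirit of the remark preceding the lemma: with $\cal D[\varphi](y)=y\varphi'(y)$ one checks $\frac{d}{dx}\big(e^{x/2}h(e^x)\big)=e^{x/2}\big((\tfrac12\,\Id+\cal D)[h]\big)(e^x)$, hence $\phi^{(n)}(x)=e^{x/2}\big((\tfrac12\,\Id+\cal D)^{n}[\varphi]\big)(e^x)$, and then the same recurrence identifies $(\tfrac12\,\Id+\cal D)^{n}=\sum_{k=0}^{n}S_{-1/2}(n,k)\,y^{k}\,\frac{d^{k}}{dy^{k}}$ as operators, which is the non-central analogue of the classical expansion of $\cal D^{n}$ used via \cite[Prop. 2.6(4)]{AIK14}.
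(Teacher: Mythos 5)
Your proof is correct and follows essentially the same route as the paper: induction on $n$, differentiation producing the recurrence $T(n+1,k)=T(n,k-1)+(k+\tfrac12)T(n,k)$, and identification with $S_{-1/2}(n,k)$ via the defining recurrence with $\alpha=-1/2$. The boundary checks at $k=0$ and $k=n+1$ and the operator-theoretic remark are welcome but not needed beyond what the paper already does.
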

\begin{proof}
    By induction, we have
    \begin{eqnarray*}
    \phi^{(n)}(x) & = & \sum_{k=0}^n T(n,k) e^{(k+1/2)x}\varphi^{(k)}(e^x),
    \end{eqnarray*}
    where $(T(n,k))_{0\leq k\leq n}$ is defined by $T(n+1,k)=T(n,k-1)+(k+1/2)T(n,k)$, and $T(n,0)=1/2^n$ if $n\geq0$, $T(0,k)=0$ if $k\geq1$. This number is known to be the non-central Stirlling number $T(n,k)=S_{-1/2}(n,k)$, as defined in \cite{Kou82} Section 2.1.
\end{proof}

Writing
$\scr R_\chi(x) = e^{x/2} A_\chi(e^{x})$,
using the previous Lemma \ref{compo-exp},
and differentiating the integral expression of $B_{\chi}$, we obtain
\begin{eqnarray*}
\scr R_\chi^{(N)}(0)
= \frac{1}{2}\int_{-\infty}^\infty (it)^N e^{it\cdot 0}\left|L\left(\frac{1}{2}+it, \b \chi\right)\right|^{2} \frac{dt}{\cosh(\pi t)} 
= \sum_{n=0}^N S_{-1/2}(N,n) A_\chi^{(n)}(1).
\end{eqnarray*}
Hence, from Lemma \ref{fund-lemma}, i.e. $A_\chi(z) = -\chi(-1) i\pi\ \psi_\chi(z)$, and Theorem \ref{deriv-psi}, i.e.
\begin{eqnarray*}
        \psi_\chi^{(k)}(1) 
        & = & \frac{\varphi(q)}{2 \pi i q} \frac{(-1)^{k}}{k+1}k! + \scr B_{\chi}(k) k! + (-1)^k k!\sum_{j=0}^k {k \choose j} \scr B_{\b\chi}(j),
\end{eqnarray*}
where 
\begin{eqnarray}
    \scr B_\chi(k)
         & = & \frac{(2\pi i q)^k}{\tau(\chi) (k+1)(k+1)!} 
            \sum_{1\leq a,b\leq q} B_{k+1}(a/q)B_{k+1}(b/q)\ \chi(ab) \xi^{ab} \,
\end{eqnarray}
we finally obtain:
\begin{eqnarray*}
\int_{-\infty}^\infty t^N\left|L\left(\frac{1}{2}+it, \b \chi\right)\right|^{2} \frac{dt}{\cosh(\pi t)} & = & 
-\chi(-1) i\pi \frac{2}{i^{N}} \left( \frac{\varphi(q)}{2 \pi i q} \sum_{k=0}^N S_{-1/2}(N,k) \frac{(-1)^{k}}{k+1} k! \right.\\
     &   & \quad + \frac{1}{\tau(\chi)} 
            \sum_{1\leq a,b\leq q-1} \chi(ab)\ \xi^{ab} \cal P_{N}(a,b) \\
    &    & \left. \quad + \frac{1}{\tau(\b\chi)} 
            \sum_{1\leq a,b\leq q-1} \b\chi(ab)\ \xi^{ab} \cal R_{N}(a,b) \right),
\end{eqnarray*}
where
\begin{eqnarray*}
    \cal P_{\chi,N}(a,b) & = & \sum_{k=0}^N  S_{-1/2}(N,k) \frac{(2\pi i q)^k}{(k+1)^2} 
            B_{k+1}(a/q)B_{k+1}(b/q)\\
    \cal R_{\chi,N}(a,b) & = & \sum_{k=0}^N S_{-1/2}(K,k) (-1)^k k! \sum_{j=0}^k {k \choose j} \frac{q^j (2\pi i)^j}{(j+1)(j+1)!} 
            B_{j+1}(a/q)B_{j+1}(b/q) \\
            & = & \sum_{j=0}^N \frac{1}{j!} \sum_{k=j}^N S_{-1/2}(K,k) (-1)^k k! {k \choose j} \frac{(2\pi i q)^j}{(j+1)^2} 
            B_{j+1}(a/q)B_{j+1}(b/q).
\end{eqnarray*}

We now simplify the previous expressions.
\begin{lemm}
We have for $N\geq0$ and $0\leq j\leq n$:
\begin{eqnarray*}
\sum_{k=0}^N S_{-1/2}(N,k) \frac{(-1)^{k}}{k+1} k! & = & (2^{1-N}- 1) B_N. 
\end{eqnarray*}
\end{lemm}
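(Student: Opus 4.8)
The plan is to compute the exponential generating function (in $N$) of the left-hand side and recognise it as the generating function of the Bernoulli polynomial values $B_N(1/2)$, then invoke the classical identity $B_N(1/2)=(2^{1-N}-1)B_N$.

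First I would record the \emph{vertical} generating function of the non-central Stirling numbers: for each fixed $k\geq0$,
\[
\sum_{n\geq0}S_{-1/2}(n,k)\frac{z^n}{n!}=\frac{e^{z/2}(e^z-1)^k}{k!}.
\]
This follows directly from the defining recursion $S_{-1/2}(n+1,k)=S_{-1/2}(n,k-1)+(k+\tfrac12)S_{-1/2}(n,k)$ together with the initial data $S_{-1/2}(n,0)=2^{-n}$ and $S_{-1/2}(0,k)=0$ for $k\geq1$. Indeed, writing $G_k(z)$ for the left-hand side, the recursion becomes the first-order linear ODE $G_k'-(k+\tfrac12)G_k=G_{k-1}$ with $G_0(z)=e^{z/2}$ and $G_k(0)=0$ for $k\geq1$, and one checks that $z\mapsto e^{z/2}(e^z-1)^k/k!$ solves it (alternatively one simply quotes the formula from \cite{Kou82}).

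Next I would multiply this identity by $(-1)^k k!/(k+1)$ and sum over $k$. Interchanging the sums (which converge locally uniformly for $z$ near $0$) gives
\[
\sum_{N\geq0}\Bigl(\sum_{k=0}^N S_{-1/2}(N,k)\frac{(-1)^k}{k+1}k!\Bigr)\frac{z^N}{N!}=e^{z/2}\sum_{k\geq0}\frac{(1-e^z)^k}{k+1}.
\]
For $z$ in a neighbourhood of $0$ one has $|1-e^z|<1$, so the elementary identity $\sum_{k\geq0}u^k/(k+1)=-\log(1-u)/u$ applies with $u=1-e^z$; since $1-u=e^z$ this collapses to $-z/(1-e^z)=z/(e^z-1)$. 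Hence the generating function equals $ze^{z/2}/(e^z-1)=\sum_{N\geq0}B_N(1/2)\,z^N/N!$, the standard generating function of the Bernoulli polynomials evaluated at $1/2$. Comparing Taylor coefficients at $z=0$ and using $B_N(1/2)=(2^{1-N}-1)B_N$ (the $m=2$ case of the multiplication theorem, equivalently $B_N=B_N(0)=2^{N-1}\bigl(B_N(0)+B_N(1/2)\bigr)$) yields the stated identity.

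The only point requiring a little care is the legitimacy of the termwise operations: one should either carry everything out in the formal power series ring $\Q[[z]]$, or observe that all the series involved converge locally uniformly near $z=0$ (the logarithm series precisely because $1-e^z\to0$), so that both the interchange of summation and the final coefficient comparison are justified. There is no genuine analytic obstacle; the content is entirely in the two generating-function evaluations above.
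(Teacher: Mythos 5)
Your proof is correct, but it follows a different route from the paper's. The paper proves this lemma by quoting Koutras's expansion $S_{-1/2}(N,k)=2^{-N}\sum_{n=k}^{N}\binom{N}{n}2^{n}S(n,k)$ in terms of ordinary Stirling numbers and then reducing to an identity already established in \cite{DH24} (Lemma 3.4 there), namely $\sum_{n}\binom{N}{n}2^{n}\sum_{k}S(n,k)k!\frac{(-1)^k}{k+1}=(2-2^N)B_N$; dividing by $2^N$ gives the claim. You instead work directly with the exponential generating function $\sum_{n\geq k}S_{-1/2}(n,k)u^n/n!=e^{u/2}(e^u-1)^k/k!$ (which the paper itself records as its equation for the companion lemma on $S_{-1/2}'(N,j)$, and which it proves by exactly the ODE/recursion argument you sketch), sum against $(-1)^k k!/(k+1)$, and collapse the $k$-sum via $\sum_{k\geq0}u^k/(k+1)=-\log(1-u)/u$ with $u=1-e^z$ to get $ze^{z/2}/(e^z-1)=\sum_N B_N(1/2)z^N/N!$. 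Your computations check out (I verified $N=0,1,2$ against the recursion), the interchange of sums is harmless since all $S_{-1/2}(n,k)\geq0$ and the double series converges absolutely for $|z|<\log 2$, and the final step $B_N(1/2)=(2^{1-N}-1)B_N$ is the same classical fact the paper invokes later in the proof of Theorem 1. What your approach buys is self-containedness (no appeal to the external lemma from \cite{DH24}), a conceptual explanation of why the answer is precisely $B_N(1/2)$, and methodological consistency with the generating-function proof the paper gives for the very next lemma; what the paper's approach buys is brevity given that the \cite{DH24} identity is already available. Either way the result stands; note in passing that the hypothesis ``$0\leq j\leq n$'' in the lemma statement is vestigial and plays no role.
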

\begin{proof}
We use Lemma 3.4 in \cite{DH24}, i.e.
\begin{eqnarray*}
\sum_{n=0}^N {N\choose n} 2^{n}\sum_{k=0}^n S(n,k) k! \frac{(-1)^{k}}{k+1} & = & (2-2^N)B_N, 
\end{eqnarray*}
and the expression (2.5) in \cite{Kou82}:\  $\d S_{-1/2}(N,k) = \frac{1}{2^N} \sum_{n=k}^N {N\choose n} 2^{n} S(n,k)$.
\end{proof}

The exponential generating function of $S_{-1/2}(n,k)$ is given by, see \cite[Sec. 2.2]{Kou82},
\begin{eqnarray} \label{gen-stirling}
    \sum_{n\geq k} S_{-1/2}(n,k) \frac{u^n}{n!} & = & \frac{e^{u/2}}{k!} (e^u-1)^k.
\end{eqnarray}
We now simplify $\cal R_{\chi,N}(a,b)$.
\begin{lemm}
    We have 
\begin{eqnarray*}
    S_{-1/2}'(N,j) := \frac{1}{j!}\sum_{k=j}^N S_{-1/2}(N,k) (-1)^k k! {k \choose j} = (-1)^N S_{-1/2}(N,j).
\end{eqnarray*}
\end{lemm}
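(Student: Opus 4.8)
The identity is best proved by passing to exponential generating functions in $N$ (with $j$ fixed) and then comparing coefficients. Concretely, I would fix $j\geq0$ and set
\[
G_j(u):=\sum_{N\geq j} j!\,S'_{-1/2}(N,j)\,\frac{u^N}{N!}
=\sum_{N\geq j}\sum_{k=j}^N S_{-1/2}(N,k)\,(-1)^k k!\binom{k}{j}\,\frac{u^N}{N!}.
\]
Since for each $N$ only finitely many $k$ occur, the double sum may be reorganized as an identity of formal power series (equivalently, it converges absolutely for $|u|$ small), giving $G_j(u)=\sum_{k\geq j}(-1)^k k!\binom{k}{j}\sum_{N\geq k}S_{-1/2}(N,k)\,u^N/N!$. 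Substituting the generating function \eqref{gen-stirling} cancels the $k!$ and leaves
\[
G_j(u)=e^{u/2}\sum_{k\geq j}(-1)^k\binom{k}{j}(e^u-1)^k=e^{u/2}\sum_{k\geq j}\binom{k}{j}\bigl(1-e^u\bigr)^k ,
\]
using $(-1)^k(e^u-1)^k=(1-e^u)^k$.

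Next I would evaluate the inner sum with the classical identity $\sum_{k\geq j}\binom{k}{j}x^k=x^j/(1-x)^{j+1}$, applied with $x=1-e^u$ (which has zero constant term as a power series in $u$, so $1/(1-x)^{j+1}=e^{-u(j+1)}$ is legitimate, and $1-x=e^u$). This yields
\[
G_j(u)=e^{u/2}\,\frac{(1-e^u)^j}{e^{u(j+1)}}=(-1)^j(e^u-1)^j\,e^{-u(j+1/2)} .
\]
Finally I would identify this with the generating function of $(-1)^N S_{-1/2}(N,j)$: replacing $u$ by $-u$ in \eqref{gen-stirling} gives $\sum_{N\geq j}(-1)^N S_{-1/2}(N,j)\,u^N/N!=e^{-u/2}(e^{-u}-1)^j/j!$, and writing $e^{-u}-1=e^{-u}(1-e^u)=-e^{-u}(e^u-1)$ turns the right-hand side into $(-1)^j(e^u-1)^j e^{-u(j+1/2)}/j!=G_j(u)/j!$. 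Comparing the coefficients of $u^N/N!$ then gives $S'_{-1/2}(N,j)=(-1)^N S_{-1/2}(N,j)$.

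There is no serious obstacle here; the argument is a short generating-function manipulation. The only points demanding a little care are the bookkeeping of the two $e^{\pm u/2}$ factors and the signs coming from $(1-e^u)^j=(-1)^j(e^u-1)^j$, together with the (routine) remark that the sum over $k$ truncates for each fixed $N$, so that the interchange of summations and all rearrangements are valid.
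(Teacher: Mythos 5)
Your proposal is correct and is essentially the same argument as the paper's: both fix $j$, form the exponential generating function in $N$, interchange the two sums, substitute the generating function (\ref{gen-stirling}), and close the binomial series $\sum_{k\geq j}\binom{k}{j}x^{k-j}=(1-x)^{-(j+1)}$; the only cosmetic difference is that the paper attaches the sign $(-1)^n$ to $S'_{-1/2}(n,j)$ and evaluates (\ref{gen-stirling}) at $-u$, whereas you keep $u$ on the left and move the sign to the right-hand side. The bookkeeping of signs and of the $e^{\pm u/2}$ factors in your version checks out.
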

\begin{proof}
We compute the corresponding exponential generating function:
\begin{eqnarray*}
    \sum_{n\geq j} (-1)^n S_{-1/2}'(n,j) \frac{u^n}{n!} & = & \sum_{n\geq j} \frac{(-1)^n}{j!}\sum_{k=j}^n S_{-1/2}(n,k) (-1)^k k! {k \choose j}  \frac{u^n}{n!} \\
        & = & \frac{1}{j!} \sum_{k\geq j}  {k \choose j} (-1)^k k! \sum_{n\geq k} S_{-1/2}(n,k) \frac{(-u)^n}{n!} \\
        & = & \frac{1}{j!} e^{-u/2} \sum_{k\geq j} {k \choose j} (-1)^k  (e^{-u}-1)^k \\
        & = & \frac{1}{j!} e^{-u/2} (1-e^{-u})^j \sum_{k\geq j} {k \choose j} (1-e^{-u})^{k-j} \\
        & = & \frac{1}{j!} e^{-u/2} \frac{(1-e^{-u})^j}{(e^{-u})^{j+1}}\ 
     = \ \frac{e^{u/2}}{j!} (e^u-1)^j,
\end{eqnarray*}
which gives the conclusion.
\end{proof}

Therefore
\begin{eqnarray*}
    \cal R_{\chi,N}(a,b) 
            & = & (-1)^N \sum_{j=0}^N S_{-1/2}(N,j) \frac{(2\pi i q)^j}{(j+1)^2} 
            B_{j+1}(a/q)B_{j+1}(b/q).
\end{eqnarray*}

Notice, using $\chi(-1)\tau(\b\chi)= \b{\tau(\chi)}$ and $|\tau(\chi)|^2=q$
\begin{eqnarray*}
\chi(-1)\left( \frac{\chi(ab)}{\tau(\chi)}   + 
              (-1)^N \frac{\b\chi(ab)}{\tau(\b\chi)} \right) & = & 
              \frac{\chi(ab)}{\b{\tau(\b\chi)}} + (-1)^N \frac{\b\chi(ab)}{\b{\tau(\chi)}}  \\
              & = & \frac{1}{q} \left( \chi(ab) \tau(\b\chi) + (-1)^N \b\chi(ab) \tau(\chi) \right).
\end{eqnarray*} 

We now write $\d \left(\frac{1}{2^{N-1}} -1\right)B_N = B_N\left(\frac{1}{2}\right)$ in the first term, see e.g. \cite[Cor. 9.1.5 p.5-6]{Coh07}.

Multiplying by $2\pi i$ and $-1/(i^N)=-(-i)^N$, and then factorizing the expression, yields the form given in Theorem \ref{main-th} (replacing $\b \chi$ by $\chi$).

\bigskip

\section{The shifted Euler-Maclaurin summation formula} \label{sec-mac}

The goal of this section is to establish the variant of the Euler-Maclaurin formula that we will use. This variant is inspired by~\cite{Zag06}.

For all \(\varphi \in \left(-\tfrac{\pi}{2}, \tfrac{\pi}{2}\right)\), we set
\[
\Cone_\varphi^* = \left\{ z \in \C^*  : |\arg(z)| < \varphi \right\} \quad\text{and}\quad
\Cone_\varphi = \Cone_\varphi^* \cup \{0\}.
\]
A function \(f\) is said to be rapidly decreasing if it is analytic on \(\re_{>0}\) and satisfies
\[
\forall (m,n) \in \mathbb{Z}_{\geq 0}^2 \quad \lim_{\substack{z \to \infty \\ z \in \re_{>0}}} z^m f^{(n)}(z) = 0.
\]

Finally, we say that a function $f$ defined on \(\re_{>0} \cup \{0\}\) is \(\Cone\)-analytic if:
\begin{itemize}
\ib $f$ is rapidly decreasing;
\ii $f$ admits an absolutely convergent power series expansion 
\[
f(z) = \sum_{n=0}^{\infty} \CDSE{f}{n} z^n
\]
\end{itemize}
near \(0\) (and hence is analytic on an open set \(\Cone_\varphi\cup B(0,\rho)\) for all \(\varphi \in \left(-\tfrac{\pi}{2}, \tfrac{\pi}{2}\right)\) and some \(\rho>0\)).

\begin{theo}\label{main-shift}
Let \(f \colon \re_{>0} \to \mathbb{C}\). Assume that there exists a constant \(\CDSE{f}{-1}\) such that the function \(z \mapsto f(z)- \tfrac{\CDSE{f}{-1}}{z}\) is \(\Cone\)-analytic. Then \(f\) admits 
 a Laurent expansion with coefficients \(\left( \CDSE{f}{n} \right)_{n \geq -1}\).

Let \(h\colon\mathbb{R}_{>0} \to \re_{>0}\). Suppose that \(h(t) \to 0\) as \(t \to 0\), and that there exist \(\varphi \in \left(-\tfrac{\pi}{2}, \tfrac{\pi}{2}\right)\) and \(t_0 \in\mathbb{R}_{>0} \) such that \(h((0, t_0)) \subset \Cone_\varphi\).

Then, for any \(\alpha \in (0,1)\), we have
\begin{multline*}
\sum_{m=0}^{\infty} f\left((m+\alpha) h(t)\right)
= -\CDSE{f}{-1} \frac{\log h(t)}{h(t)}
+ \left(
\CDSE{f}{-1} K(\alpha)
+ \int_0^{\infty} \left(f(x) - \CDSE{f}{-1} \frac{e^{-x}}{x} \right) dx
\right) \frac{1}{h(t)} \\
- \sum_{n=0}^{N} \frac{B_{n+1}(\alpha)}{n+1} \CDSE{f}{n} h(t)^n
+ O\left(h(t)^N\right)
\end{multline*}
where \(\d K(\alpha) = -\gamma - \tfrac{\Gamma'}{\Gamma}(\alpha)\).
\end{theo}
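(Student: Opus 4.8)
The plan is to reduce the shifted sum to the classical (unshifted) Euler–Maclaurin situation by a two-step argument: first handle the singular part $\CDSE{f}{-1}/z$ exactly, then apply a shifted Euler–Maclaurin expansion to the $\Cone$-analytic remainder. Write $g(z) = f(z) - \CDSE{f}{-1}/z$, which by hypothesis is $\Cone$-analytic with power series $g(z) = \sum_{n\geq 0}\CDSE{f}{n}z^n$ near $0$; then $f$ has the claimed Laurent expansion. For the singular part, I would use the exact identity
\[
\sum_{m=0}^{\infty} \frac{1}{(m+\alpha)u} = \frac{1}{u}\sum_{m=0}^\infty \frac{1}{m+\alpha},
\]
which diverges, so instead one works with the regularized/truncated version: the key analytic input is that
\[
\sum_{m=0}^{\infty}\left(\frac{1}{m+\alpha} - \frac{e^{-(m+\alpha)u}}{?}\right)
\]
is not quite the right splitting; rather, the cleanest route is to write $f((m+\alpha)h) = \CDSE{f}{-1}\,\frac{e^{-(m+\alpha)h}}{(m+\alpha)h} + \big(f((m+\alpha)h) - \CDSE{f}{-1}\frac{e^{-(m+\alpha)h}}{(m+\alpha)h}\big)$, where the bracketed term is now $\Cone$-analytic and bounded near $0$ (the $1/z$ singularities cancel), while the first term is summed in closed form using the Lerch/Hurwitz identity
\[
\sum_{m=0}^{\infty} \frac{e^{-(m+\alpha)u}}{m+\alpha} = -\log u + K(\alpha) + O(u\log u),\qquad u\to 0^+,
\]
which follows from $\sum_{m\geq 0} e^{-(m+\alpha)u}/(m+\alpha) = \int_u^\infty \sum_{m\geq 0} e^{-(m+\alpha)v}\,dv = \int_u^\infty \frac{e^{-\alpha v}}{1-e^{-v}}\,dv$ together with the standard asymptotics of the incomplete digamma integral; this is exactly where the constant $K(\alpha) = -\gamma - \Gamma'/\Gamma(\alpha)$ enters, and multiplying by $\CDSE{f}{-1}/h(t)$ produces the first two terms of the stated formula.

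For the regular part, let $\tilde g(z) = f(z) - \CDSE{f}{-1}e^{-z}/z$; this is $\Cone$-analytic with $\tilde g(0) = \CDSE{f}{0} + \CDSE{f}{-1}$ and, more importantly, $\tilde g$ has the same power-series coefficients $\CDSE{f}{n}$ as $f$'s Laurent tail shifted by the Taylor coefficients of $\CDSE{f}{-1}e^{-z}/z$. I would then invoke the shifted Euler–Maclaurin formula for a rapidly decreasing function analytic on a cone: for $\tilde g$ $\Cone$-analytic,
\[
\sum_{m=0}^{\infty}\tilde g\big((m+\alpha)u\big) = \frac{1}{u}\int_0^\infty \tilde g(x)\,dx - \sum_{n=0}^{N}\frac{B_{n+1}(\alpha)}{n+1}\,\CDSE{\tilde g}{n}\,u^n + O(u^{N+1}),
\]
which is the content of Zagier's discussion in \cite{Zag06} adapted to the complex cone $\Cone_\varphi$: one derives it by applying the classical Euler–Maclaurin summation formula on $[0,\infty)$ with the Bernoulli-polynomial remainder, using the rapid decrease to kill boundary terms at $+\infty$, and expanding $\tilde g((m+\alpha)u)$ near the lower endpoint via its power series; the shift $\alpha$ turns the usual $B_n$ into $B_n(\alpha)$, and the absolute convergence of the power series on $\Cone_\varphi$ plus the hypothesis $h((0,t_0))\subset\Cone_\varphi$ justifies termwise manipulation along the curve $u=h(t)$. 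Finally I would recombine: the $\int_0^\infty \tilde g(x)\,dx$ term is exactly $\int_0^\infty\big(f(x) - \CDSE{f}{-1}e^{-x}/x\big)dx$, and since $\CDSE{\tilde g}{n}$ differs from $\CDSE{f}{n}$ only through the Taylor coefficients of $\CDSE{f}{-1}e^{-x}/x$, a short bookkeeping check — comparing the $n$-th coefficient of $e^{-x}/x$ with the correction already produced by the singular part's Hurwitz expansion — shows the two sets of $u^n$-coefficients assemble into precisely $-\frac{B_{n+1}(\alpha)}{n+1}\CDSE{f}{n}$.

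The main obstacle is the uniformity of the error term $O(h(t)^N)$ along the curve $h(t)$ as it approaches $0$ inside the cone $\Cone_\varphi$, rather than along the positive real axis: one must verify that the Euler–Maclaurin remainder integral $\int_0^\infty B_{N+1}(\{x\})\tilde g^{(N+1)}\big((\,\cdot\,)u\big)\,dx$-type term is $O(u^{N+1})$ uniformly for $u\in\Cone_\varphi$ near $0$, which uses both the rapid decrease of all derivatives of $\tilde g$ on $\re_{>0}$ (to control the tail of the sum/integral) and the Cauchy estimates on $\Cone_\varphi\cap B(0,\rho)$ for the power-series part (to control the head), the two regimes being glued at a fixed radius. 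A secondary technical point is the $O(u\log u)$ versus $O(u)$ precision needed in the Lerch expansion so that, after multiplication by $1/h(t)$, nothing of order $\log h(t)$ or constant is lost into the error; tracking this carefully is routine but must be done.
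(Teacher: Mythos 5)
Your proposal follows essentially the same route as the paper: the same decomposition $f(z)=\check f_{-1}\,e^{-z}/z+\tilde g(z)$ with $\tilde g$ $\Cone$-analytic, the same shifted Euler--Maclaurin lemma for the regular part (including the same two-regime estimate of the Bernoulli remainder, gluing the power-series bound near $0$ to the rapid-decay bound at infinity), and the same evaluation of $K(\alpha)=-\gamma-\tfrac{\Gamma'}{\Gamma}(\alpha)$ via $\int_u^\infty e^{-\alpha v}/(1-e^{-v})\,dv$. The only point to tighten is that the Lerch/Hurwitz expansion must be carried to the full order $u^{N+1}$, with explicit coefficients $\frac{B_{n+1}(\alpha)}{n+1}\frac{(-1)^n}{(n+1)!}$, rather than stopping at $O(u\log u)$, since these are exactly the terms that cancel the corrections $\frac{(-1)^n}{(n+1)!}\check f_{-1}$ appearing in the Taylor coefficients of $\tilde g$ --- which is precisely the bookkeeping you indicate at the end.
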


The rest of this section is devoted to proving this theorem, starting with two auxiliary lemmas. The arguments are adapted from those presented in \cite{Bak22}.

\begin{lem}\label{Thm:ShiftEM}
Let \(f\) be a \(\Cone\)-analytic function. Then
\begin{eqnarray*}
\sum_{m=0}^{\infty} f\left((m+\alpha)z\right)
& = & \left(\int_0^{\infty} f\right) \frac{1}{z}
- \sum_{n=0}^{N} \frac{B_{n+1}(\alpha)}{n+1} \CDSE{f}{n} z^n
- R_N(z)
\end{eqnarray*}
where
\begin{eqnarray*}
R_N(z) & = & (-z)^{N+1} \int_0^{\infty} \frac{B_{N+1}\left(\{x - \alpha\}\right)}{(N+1)!} f^{(N+1)}(xz) dx
\end{eqnarray*}
for all \(z \in \re_{>0}\). Moreover, for any \(\varphi \in \left(-\tfrac{\pi}{2}, \tfrac{\pi}{2}\right)\), as \(z \to 0\) in \(\Cone_\varphi\), we have \(R_N(z) = O(z^N)\).
\end{lem}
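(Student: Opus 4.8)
The plan is to restrict $f$ to the ray through $z$, reducing the claim to the classical shifted (Hurwitz-type) Euler--Maclaurin summation formula for a rapidly decreasing function of a real variable, and then to read off the remainder bound from the explicit Bernoulli-function integral.

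First I would fix $z\in\re_{>0}$ and set $g(t)=f(tz)$ for $t\geq0$. Since $tz\in\re_{>0}$ for $t>0$ and $f$ has a convergent power series at $0$, the function $g$ is $C^\infty$ on $[0,\infty)$ with $g^{(n)}(t)=z^n f^{(n)}(tz)$, so that $g^{(n)}(0)=n!\,\CDSE{f}{n}z^n$; it is rapidly decreasing because $|tz|\to\infty$ and $f$ is rapidly decreasing on the half-plane. Rotating the contour from the ray $\{tz:t>0\}$ back to $\re_{>0}$ (licit by analyticity and rapid decay in the sector between the two) gives $\int_0^\infty g(t)\,dt=z^{-1}\int_0^\infty f$. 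After substituting $g(t)=f(tz)$, the asserted identity is then equivalent to the real-variable statement
\[
\sum_{m=0}^\infty g(m+\alpha)=\int_0^\infty g(t)\,dt-\sum_{n=0}^N\frac{B_{n+1}(\alpha)}{(n+1)!}\,g^{(n)}(0)-\frac{(-1)^{N+1}}{(N+1)!}\int_0^\infty g^{(N+1)}(t)\,B_{N+1}(\{t-\alpha\})\,dt .
\]

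Next I would prove this by the usual Euler--Maclaurin bookkeeping. Writing $\beta_n(t)=B_n(\{t-\alpha\})/n!$, the functions $\beta_n$ for $n\geq2$ are $1$-periodic, continuous, piecewise smooth with $\beta_n'=\beta_{n-1}$, whereas $\beta_1(t)=\{t-\alpha\}-\tfrac12$ has derivative $1$ off $\alpha+\Z_{\geq0}$, a unit downward jump at each $\alpha+m$, and is continuous at $0$ (as $\alpha\in(0,1)$). The jump structure yields $\sum_{m\geq0}g(m+\alpha)=\int_0^\infty g\,dt-\int_0^\infty g\,d\beta_1$. Integrating by parts $N+1$ times, using $\beta_n'=\beta_{n-1}$ for $n\geq2$, the value $\beta_n(0^+)=B_n(1-\alpha)/n!=(-1)^nB_n(\alpha)/n!$, and the vanishing of all boundary terms at $+\infty$ by rapid decay, the boundary contributions telescope to $\sum_{n=0}^N\frac{B_{n+1}(\alpha)}{(n+1)!}g^{(n)}(0)$ while the leftover integral is $(-1)^{N+1}\int_0^\infty g^{(N+1)}\beta_{N+1}\,dt$; this is precisely the displayed identity.

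Finally, for the estimate $R_N(z)=O(z^N)$ I would use the boundedness of $B_{N+1}$ on $[0,1]$ to get $|R_N(z)|\leq C_N\,|z|^{N+1}\int_0^\infty|f^{(N+1)}(xz)|\,dx$ with $C_N=\sup_{[0,1]}|B_{N+1}|/(N+1)!$, then rescale $x\mapsto x/|z|$, reducing matters to the uniform bound $\sup_{|\theta|<\pi/2}\int_0^\infty|f^{(N+1)}(xe^{i\theta})|\,dx<\infty$: split at $x=\rho/2$, using that $f^{(N+1)}$ is bounded on $\overline{B(0,\rho/2)}$ (analyticity near $0$) and that rapid decay forces $|f^{(N+1)}(w)|\leq C|w|^{-2}$ uniformly on $\re_{>0}$, so the tail contributes $\leq 2C/\rho$. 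This gives $|R_N(z)|=O(|z|^{N+1}\cdot|z|^{-1})=O(|z|^N)$, uniformly as $z\to0$ in $\Cone_\varphi$. The hard part is not any single estimate but the careful bookkeeping in the repeated integration by parts --- handling the sawtooth jump of $\beta_1$, checking there is no boundary term at $t=0$, and tracking the alternating signs so that the boundary sum comes out exactly as $\sum\frac{B_{n+1}(\alpha)}{(n+1)!}g^{(n)}(0)$ and the remainder as the stated integral --- together with making sure the contour rotations and the uniformity of the remainder bound are valid for complex $z$ throughout the right half-plane, the latter being what forces the upgrade of the pointwise ``rapidly decreasing'' hypothesis to uniform decay on the half-plane.
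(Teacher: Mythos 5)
Your proof is correct and follows essentially the same route as the paper: a real-variable shifted Euler--Maclaurin identity obtained by repeated integration by parts against the shifted periodic Bernoulli functions (the paper splits $[0,1]$ at $\alpha$ and telescopes over unit intervals, you work directly on $[0,\infty)$ via the sawtooth's jump measure --- the same bookkeeping), followed by the substitution $x\mapsto xz$, a contour rotation to identify $z\int_0^\infty f(xz)\,dx$ with $\int_0^\infty f$, and a remainder bound obtained by splitting the integral and using boundedness near $0$ together with the quadratic decay of $f^{(N+1)}$ at infinity. No substantive difference from the paper's argument.
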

\begin{proof}
By integration by parts,
\begin{multline*}
\int_0^\alpha \frac{B_n(x - \alpha + 1)}{n!} f^{(n)}(x) dx
= \frac{B_{n+1}(1)}{(n+1)!} f^{(n)}(\alpha) - \frac{B_{n+1}(1 - \alpha)}{(n+1)!} f^{(n)}(0) \\
- \int_0^\alpha \frac{B_{n+1}(x - \alpha + 1)}{(n+1)!} f^{(n+1)}(x) dx
\end{multline*}
and
\begin{multline*}
\int_\alpha^1 \frac{B_n(x - \alpha)}{n!} f^{(n)}(x) dx
= \frac{B_{n+1}(1 - \alpha)}{(n+1)!} f^{(n)}(1) - \frac{B_{n+1}(0)}{(n+1)!} f^{(n)}(\alpha) \\
- \int_\alpha^1 \frac{B_{n+1}(x - \alpha)}{(n+1)!} f^{(n+1)}(x) dx
\end{multline*}
since \(B'_{n+1}(x) = (n+1) B_n(x)\). For \(n \geq 1\), we have \(B_{n+1}(0) = B_{n+1}(1)\), \(B_1(1) = -B_1(0) = 1/2\), and \(B_{n+1}(1 - \alpha) = (-1)^{n+1} B_{n+1}(\alpha)\). Summing these two equalities and then summing the resulting identity over \(n\), we get
\[
\int_0^1 f - f(\alpha)
= -\sum_{n=0}^N \frac{B_{n+1}(\alpha)}{(n+1)!} \left(f^{(n)}(1) - f^{(n)}(0)\right)
+ (-1)^{N+1} \int_0^1 \frac{B_{N+1}(\{x - \alpha\})}{(N+1)!} f^{(N+1)}(x) dx.
\]
Applying this identity to the function \(x \mapsto f(x + m)\) and summing over \(m\), we obtain
\[
\sum_{m=0}^{\infty} f(\alpha + m)
= \int_0^{\infty} f
- \sum_{n=0}^N \frac{B_{n+1}(\alpha)}{n+1} \frac{f^{(n)}(0)}{n!}
+ (-1)^N \int_0^{\infty} \frac{B_{N+1}(\{x - \alpha\})}{(N+1)!} f^{(N+1)}(x) dx.
\]
We then apply this identity to the function \(x \mapsto f(xz)\) for \(z \in \re_{>0}\). Noting that
\[
z \int_0^{\infty} f(xz) dx = \int_0^{\infty} f
\]
(for instance by integrating over a contour supported on the real axis, the line through \(0\) with direction \(z\), bounded by two arcs centered at \(0\), or via holomorphic extension from the identity valid for real \(z\)), we obtain
\[
\sum_{m=0}^{\infty} f\left((\alpha + m)z\right)
= \frac{1}{z} \int_0^{\infty} f
- \sum_{n=0}^N \frac{B_{n+1}(\alpha)}{n+1} \frac{f^{(n)}(0)}{n!} z^n
+ \widetilde{R}_N(z)
\]
where
\[
\widetilde{R}_N(z)
= (-1)^N z^{N+1} \int_0^{\infty} \frac{B_{N+1}(\{x - \alpha\})}{(N+1)!} f^{(N+1)}(xz) dx.
\]

The integral is \(O(1/|z|)\) near \(0\). Indeed, suppose \(|z| \geq 1\). The integral over \([0, 1/|z|]\) is \(O(1/|z|)\), since its integrand is uniformly bounded by \(\| B_{N+1} f^{(N+1)} \|_\infty\), the norm being taken over \([0,1]\); the integral over \([1/|z|, \infty)\) is also \(O(1/|z|)\) due to the rapid decay of \(f\), which implies the existence of a constant \(A > 0\) such that \(|f^{(N+1)}(\xi)| \leq A / |\xi|^2\) for \(|\xi| \geq 1\). We deduce that \(|\widetilde{R}_N(z)| = O(|z|^N)\).
\end{proof}
\begin{rem}
Lemma~\ref{Thm:ShiftEM} implies, under the same assumptions,
\begin{eqnarray*}
\sum_{m=0}^{\infty} f\left((m+\alpha)z\right)
& = & \left(\int_0^{\infty} f\right) \frac{1}{z}
- \sum_{n=0}^N \frac{B_{n+1}(\alpha)}{n+1} \CDSE{f}{n} z^n
+ O\left(z^{N+1}\right).
\end{eqnarray*}
\end{rem}
\smallskip

\begin{cor}\label{shift-exp}
Let \(\varphi \in \left(-\tfrac{\pi}{2}, \tfrac{\pi}{2}\right)\). As \(w \to 0\) in \(\Cone_\varphi^*\), we have
\begin{eqnarray*}
\sum_{m=0}^{\infty} \frac{e^{-(m+\alpha)w}}{(m+\alpha)w}
& = & -\frac{\log(w)}{w}
+ \frac{K(\alpha)}{w}
+ \sum_{m=0}^N \frac{B_{m+1}(\alpha)}{m+1} \frac{(-1)^m}{(m+1)!} w^m
+ O\left(w^{N+1}\right)
\end{eqnarray*}
where \(K(\alpha) = -\gamma - \frac{\Gamma'}{\Gamma}(\alpha)\).
\end{cor}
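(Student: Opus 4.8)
The plan is to derive the expansion from Lemma~\ref{Thm:ShiftEM}, applied not to $z\mapsto e^{-z}/z$ (which is not $\Cone$-analytic, its $-1/z$ part failing to be rapidly decreasing) but to $f(z)=e^{-z}$; I would then integrate once in the variable $w$ and identify the constant of integration with $K(\alpha)$.

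First I would note that $f(z)=e^{-z}$ is $\Cone$-analytic: it is entire, hence analytic on $\re_{>0}$ and on each $\Cone_\varphi$; it is rapidly decreasing there since $z^mf^{(n)}(z)=(-1)^nz^me^{-z}\to0$ as $z\to\infty$ in $\re_{>0}$; and its everywhere-convergent Maclaurin series $e^{-z}=\sum_{n\geq0}\frac{(-1)^n}{n!}z^n$ gives $\CDSE{f}{n}=\frac{(-1)^n}{n!}$. Since $\int_0^\infty e^{-x}\,dx=1$, the Remark following Lemma~\ref{Thm:ShiftEM} yields, for every $\varphi\in\left(-\tfrac{\pi}{2},\tfrac{\pi}{2}\right)$ and as $w\to0$ in $\Cone_\varphi$,
\[
\sum_{m=0}^\infty e^{-(m+\alpha)w}=\frac1w-\sum_{n=0}^{N}\frac{B_{n+1}(\alpha)}{n+1}\frac{(-1)^n}{n!}\,w^n+O\big(w^{N+1}\big);
\]
equivalently, this is just the Taylor expansion near $0$ of $\frac{e^{-\alpha w}}{1-e^{-w}}-\frac1w$, read off from $\frac{te^{\alpha t}}{e^t-1}=\sum_{n\geq0}B_n(\alpha)\frac{t^n}{n!}$ at $t=-w$.

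Set $S(w)=\sum_{m\geq0}\frac{e^{-(m+\alpha)w}}{m+\alpha}$, so the left-hand side of the corollary equals $S(w)/w$. For $\re(w)>0$ one has $S'(w)=-\sum_{m\geq0}e^{-(m+\alpha)w}$, hence $S'(w)+\frac1w=\sum_{n\geq0}\frac{B_{n+1}(\alpha)}{n+1}\frac{(-1)^n}{n!}w^n$, a power series convergent for $|w|<2\pi$; therefore $w\mapsto S(w)+\log w$ extends analytically across $0$, and integrating termwise gives, for $\re(w)>0$ and $|w|$ small,
\[
S(w)=-\log w+C+\sum_{n\geq0}\frac{B_{n+1}(\alpha)}{n+1}\frac{(-1)^n}{(n+1)!}\,w^{n+1},\qquad C:=\lim_{\substack{w\to0\\ \re(w)>0}}\big(S(w)+\log w\big),
\]
where I used $(n+1)^2\,n!=(n+1)(n+1)!$ to put the coefficient in this form. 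Dividing by $w$ and truncating at order $N$ then gives precisely the claimed expansion of $\sum_{m\geq0}\frac{e^{-(m+\alpha)w}}{(m+\alpha)w}$, except with $C/w$ in place of $K(\alpha)/w$; and because $S(w)+\log w$ is analytic near $0$, this expansion holds as $w\to0$ in $\Cone_\varphi^*$ for every such $\varphi$.

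Everything thus reduces to showing $C=K(\alpha)=-\gamma-\frac{\Gamma'}{\Gamma}(\alpha)$, which I expect to be the only real obstacle, the rest being bookkeeping. I would argue via the representation $S(w)=\int_w^\infty\frac{e^{-\alpha u}}{1-e^{-u}}\,du$ (valid for $\re(w)>0$ from $\frac{e^{-(m+\alpha)w}}{m+\alpha}=\int_w^\infty e^{-(m+\alpha)u}\,du$ and summation), writing
\[
S(w)+\log w=\int_w^\infty\Big(\frac{e^{-\alpha u}}{1-e^{-u}}-\frac{e^{-u}}{u}\Big)du+\Big(\int_w^\infty\frac{e^{-u}}{u}\,du+\log w\Big).
\]
The integrand of the first integral is bounded near $0$ (its $1/u$ singularities cancel) and $O(e^{-\alpha u})$ at infinity, so the first term tends to $\int_0^\infty\big(\frac{e^{-\alpha u}}{1-e^{-u}}-\frac{e^{-u}}{u}\big)du=-\frac{\Gamma'}{\Gamma}(\alpha)$ by Gauss's integral formula for the digamma function, while the second bracket tends to $-\gamma$ since $\int_w^\infty\frac{e^{-u}}{u}\,du=-\gamma-\log w+O(w)$; adding these gives $C=-\gamma-\frac{\Gamma'}{\Gamma}(\alpha)=K(\alpha)$. (One could instead compare $S(w)$ with $\sum_{k\geq1}\frac{e^{-kw}}{k}=-\log(1-e^{-w})$, so that $S(w)+\log(1-e^{-w})=\sum_{m\geq0}\big(\tfrac{e^{-(m+\alpha)w}}{m+\alpha}-\tfrac{e^{-(m+1)w}}{m+1}\big)\to\sum_{m\geq0}\big(\tfrac1{m+\alpha}-\tfrac1{m+1}\big)=-\gamma-\frac{\Gamma'}{\Gamma}(\alpha)$ and $\log w-\log(1-e^{-w})\to0$; there the interchange of limit and summation is the only delicate point, handled by a short uniform-tail estimate.)
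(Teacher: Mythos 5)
Your argument is correct, and its overall architecture coincides with the paper's: both apply Lemma~\ref{Thm:ShiftEM} to the exponential $f(z)=e^{-z}$ (whose coefficients are $\CDSE{f}{n}=(-1)^n/n!$ and whose integral is $1$), observe that the resulting expansion of $\sum_m e^{-(m+\alpha)w}$ minus $1/w$ is an honest convergent power series near $0$, antidifferentiate to produce the $-\log w$ term plus a constant $C$, and then reduce everything to showing $C=K(\alpha)$. The only genuine divergence is in that last identification. The paper proceeds by writing $\sum_m e^{-(m+\alpha)\epsilon}/(m+\alpha)=\int_\epsilon^\infty e^{(1-\alpha)x}(e^x-1)^{-1}\,dx$, integrating by parts to bring in $\log(e^\epsilon-1)$ (which supplies the $\log\epsilon$ cancellation), and then recognizing the surviving integral $\alpha\int_0^1(1-u)^{1-\alpha}\log u\,du$ as a tabulated expression for $-\gamma-\tfrac{\Gamma'}{\Gamma}(\alpha)$ via Gradshteyn--Ryzhik. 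You instead subtract $e^{-u}/u$ inside the integral $\int_w^\infty e^{-\alpha u}(1-e^{-u})^{-1}\,du$, invoking Gauss's integral formula for the digamma function for one piece and the expansion $\int_w^\infty e^{-u}u^{-1}\,du=-\gamma-\log w+O(w)$ of the exponential integral for the other; your parenthetical alternative (comparison with $-\log(1-e^{-w})$ and the series $\sum_m(\tfrac{1}{m+\alpha}-\tfrac{1}{m+1})$) is a third correct route. Your version is somewhat more self-contained in that it leans on two standard named identities rather than a chain of table entries, at the cost of having to justify the convergence of the subtracted integrand near $0$ and at infinity, which you do. One cosmetic remark: like the paper, you assert that $e^{-z}$ is rapidly decreasing "in $\re_{>0}$", which is false on the full half-plane (take $z=x_0+iy$, $y\to\infty$) but true on every proper cone $\Cone_\varphi$, which is all that Lemma~\ref{Thm:ShiftEM} actually uses; this imprecision is inherited from the paper's definition and does not affect the proof.
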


\begin{proof}
Let \(w \in \re_{>0}\) and \(\varphi \in \left(-\tfrac{\pi}{2}, \tfrac{\pi}{2}\right)\) such that \(w \in \Cone_\varphi^*\). We apply Lemma~\ref{Thm:ShiftEM} to write
\[
\sum_{m=0}^{\infty} e^{-(m+\alpha)\xi}
= \frac{1}{\xi}
- \sum_{n=0}^{N+1} \frac{(-1)^n B_n(\alpha)}{(n+1)!} \xi^n
- R_{N+1}(\xi)
\]
for all \(\xi \in \Cone_\varphi^*\). The bound \(B_n(\alpha)=O\left(n!/(2\pi)^n\right)\) (see~\cite{Leh40}) implies that
\[
R_{N+1}(\xi)=\sum_{n=N+2}^{\infty} \frac{(-1)^n B_n(\alpha)}{(n+1)!} \xi^n
\]
and hence, \(R_{N+1}\) has a primitive on a disc centered a \(0\). Let \(\widetilde{R}_{N+1}\) denote such a primitive  vanishing at \(0\).

We get
\[
\sum_{m=0}^{\infty} \frac{e^{-(m+\alpha)w}}{m+\alpha}
= K(\alpha)
+ \log(w)
- \sum_{n=0}^{N+1} \frac{(-1)^n B_n(\alpha)}{(n+1)(n+1)!}w^{n+1}
- \widetilde{R}_{N+1}(w).
\]

To evaluate \(K(\alpha)\), we observe that
\begin{align*}
\sum_{m=0}^{\infty} \frac{e^{-(m+\alpha)\epsilon}}{m+\alpha}
&= \sum_{m=0}^{\infty} \int_\epsilon^{\infty} e^{-(m+\alpha)x} dx
= \int_\epsilon^{\infty} \frac{e^{(1-\alpha)x}}{e^x - 1} dx \\
&= e^{-\alpha \epsilon} \log(e^\epsilon - 1)
+ \alpha \int_\epsilon^{\infty} e^{-\alpha x} \log(e^x - 1) dx
\end{align*}
so that
\[
\sum_{m=0}^{\infty} \frac{e^{-(m+\alpha)\epsilon}}{m+\alpha}
+ \log(\epsilon)
= \alpha \int_0^{1} (1-u)^{1-\alpha}\log(u) du + o(1)= -\gamma - \frac{\Gamma'}{\Gamma}(\alpha) +o(1)
\]
using~\cite[4.253.1,\ 8.384.1,\ 8.366.1,\ 8.365.1]{GR07}.


Finally,
\[
|\widetilde{R}_{N+1}(w)| \leq \max_{z \in [\epsilon, w]} |R_{N+1}(z)| \cdot |w - \epsilon|
= O(|w|^{N+1}).
\]
\end{proof}

\begin{proof}[\proofname{} of Theorem~\ref{main-shift}]
Define \(g(z) = f(z) - \CDSE{f}{-1} \frac{e^{-z}}{z}\). Then \(g\) is \(\Cone\)-analytic and
\[
\CDSE{g}{n} = \CDSE{f}{n} + \frac{(-1)^n}{(n+1)!} \CDSE{f}{-1}
\]
for all integers \(n \geq 0\).
Lemma~\ref{Thm:ShiftEM} then gives
\begin{multline*}
\sum_{m=0}^{\infty} g\left((m+\alpha) h(t)\right)
= \frac{1}{h(t)} \int_0^{\infty} \left(f(t) - \CDSE{f}{-1} \frac{e^{-t}}{t}\right) dt \\
- \sum_{n=0}^N \frac{B_{n+1}(\alpha)}{n+1}
\left( \CDSE{f}{n} + \frac{(-1)^n}{(n+1)!} \CDSE{f}{-1} \right) h(t)^n
+ O\left(h(t)^N\right).
\end{multline*}
The result then follows from Corollary~\ref{shift-exp}.
\end{proof}

\bigskip

\section{Examples and numerical experiments} \label{sec-num}

\subsection{A few remarks and examples}
We now set 
\begin{eqnarray*}
m_N(\chi) & = & \int_{-\infty}^\infty t^{N}\left|L\left(\frac{1}{2}+it, \chi\right)\right|^{2} \frac{dt}{\cosh(\pi t)}.
\end{eqnarray*}

For any character, we have from the change of variable $t\to-t$:
$m_{2N+1}(\chi)=-m_{2N+1}(\b\chi)$.
Hence, if $\chi$ is real, the odd moments are $0$. If $\chi$ is not real, the odd moments may be positive or negative; an example with negative odd moments is given below.

For any odd primitive quadratic character $\chi$, Theorem \ref{main-th} gives:
\begin{eqnarray*}
m_{0}(\chi) & = & \frac{\varphi(q)}{q}   - \frac{4\pi i \tau(\chi)}{q} 
            \sum_{1\leq a,b\leq q} \left(\frac{a}{q}-\frac{1}{2}\right)\left(\frac{b}{q}-\frac{1}{2}\right) \chi(ab) \xi^{ab} .
\end{eqnarray*}
Notice that $\b{\tau(\chi)}=\chi(-1)\tau(\b\chi)=-\tau(\chi)$, so $4\pi i\tau(\chi)\in\R$, and so is the sum (which can also be checked in an elementary way). 

Remarkably, for $\chi_4$,  the sum above is $0$, and $\d m_{0}(\chi_4)=\frac{1}{2}$. The $L-$function associated to this character has many interesting properties, see also \cite[p.458]{BPY01}.
Also note that 
$$
m_0(\chi_3)  = \frac{2}{3} - \frac{2\pi}{9\sqrt{3}} = 0.2636001412\ldots
$$
for the primitive odd character $\chi_3$ modulo $3$, which shows that the sign of the first term right after the Stirling number can be negative.

\subsection{The first moments of the first odd characters}

The characters $\chi_3$ and $\chi_4$ are real and there are two complex conjugate characters modulo $5$. Define $\chi_5(2)=-\chi_5(3)=-i$, $\chi_5(-1)=-1$. 
\\
\renewcommand{\arraystretch}{1.3}
\begin{center}
\begin{tabular}{|c|c|c|c|}
    \hline
    $k$ & $m_k(\chi_3)$ & $m_k(\chi_4)$ & $m_k(\chi_5)$ \\
    \hline
    0 & $0.263600\ldots$ & $0.5$        & $0.699061\ldots$ \\
    1 & $0$             & $0$             & $-0.045214\ldots$ \\
    2 & $0.101605\ldots$ & $0.178744\ldots$ & $0.238198\ldots$ \\
    3 & $0$             & $0$             & $-0.082899\ldots$ \\
    4 & $0.217067\ldots$ & $0.334811\ldots$ & $0.414305\ldots$ \\
    5 & $0$             & $0$             & $-0.338419\ldots$ \\
    6 & $1.091158\ldots$ & $1.413929\ldots$ & $1.616508\ldots$ \\
    7 & $0$             & $0$             & $-2.209431\ldots$ \\
    8 & $9.310390\ldots$ & $9.787352\ldots$ & $10.565897\ldots$ \\
    9 & $0$             & $0$             & $-19.075490\ldots$ \\
    10 & $114.676457$   & $95.077818\ldots$    & $103.068850\ldots$ \\
    \hline
\end{tabular}
\end{center}
\renewcommand{\arraystretch}{1}

\smallskip

\subsection{The moment $0$ and the central point}
It is tempting to study the ratio between $m_0(\chi)$ and $|L(1/2,\chi)|^2$, which is displayed in Figure \ref{fig:RatioMomentHalfL2} up to the $100$th primes.

Large values of this ratio do not appear to be particularly rare, and a more detailed investigation using PARI/GP reveals that increasingly exceptional behavior, relative to an average trend, can occur as the modulus grows.

\begin{figure}[H]
    \centering
    \includegraphics[scale = 0.5]{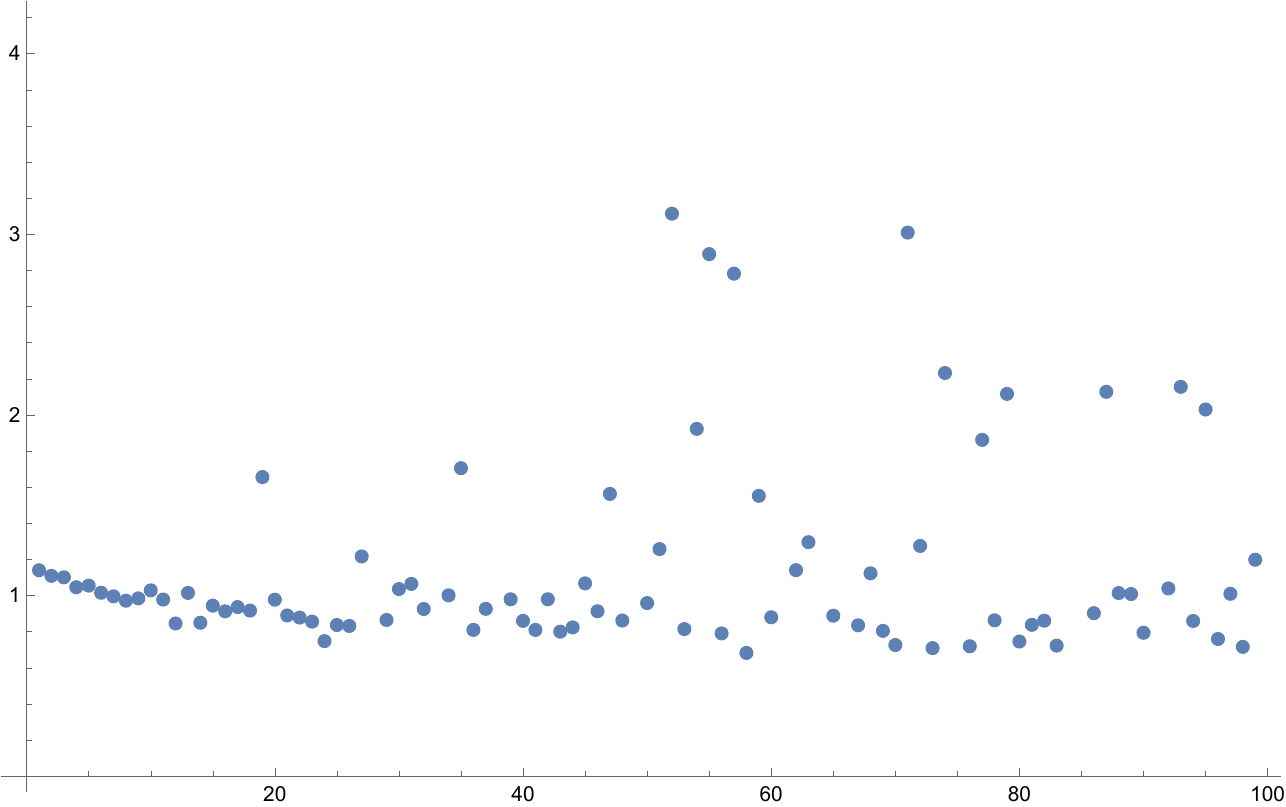}
    \caption{Graph of $\d n\longmapsto \frac{m_0(\chi_{p_n})}{|L(1/2,\chi_{p_n})|^2}$, where $\chi_{p_n}$ is a random character modulo the $n-$th prime $p_n$, $n\leq 100$.}
    \label{fig:RatioMomentHalfL2}
\end{figure}

\subsection{About the coefficients in the expansion $\psi_\chi(1+z)$}

We write the Taylor series of $\psi_{\chi}$ around $1$:
\[
\psi_{\chi}(1 + z) = \sum_{n = 0}^\infty \psi_{n,\chi} z^n, \quad \psi_{n,\chi}= \frac{\psi_\chi^{(n)}(1)}{n!},
\]
where the formula for $\psi_{n,\chi}$ is given by Theorem \ref{deriv-psi}. We now fix $q = 5$ and $\chi = \b \chi_5$ the non-real character defined by\  $\b \chi_5(2)=-\b \chi_5(3)=i$, $\b \chi_5(-1)=-1$.

First, the values  $\psi_n$ are surprisingly small (see Figure \ref{fig: SinGraph}). Actually they seem to decay sub-exponentially in $n$. The reason this is a surprise is that the values $ \scr B_\chi(n)$ in Theorem \ref{deriv-psi} seem to grow at least exponentially in $n$. A similar phenomena is also recorded in \cite[Th. 2 p.5712]{BC13b}. 

\begin{figure}[H]
     \centering
    \includegraphics[]{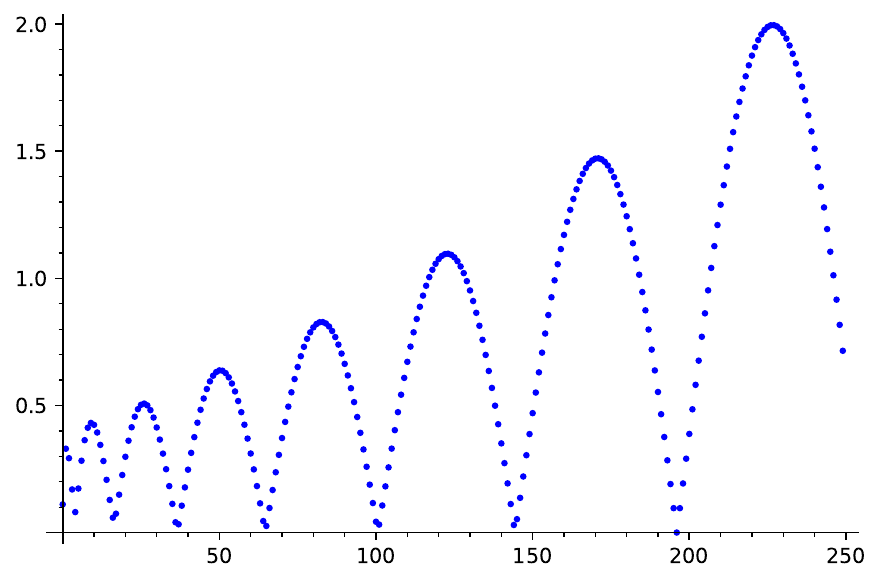}
    \caption{Graph of $n\longmapsto e^{\sqrt{\pi n}}|\psi_{n, \chi}|$ for $n \leq 250$.}
\label{fig: SinGraph}
\end{figure}

 Second, we observe that the values $\psi_{n, \chi}$ roughly lie on a line in the $(\re, \im)$-plane. This is reflected in Figure \ref{fig:Psinonline}. It seems to be natural to plot the ratio  $R_n := \im(\psi_{n,\chi})/\re(\psi_{n, \chi})$. The result is Figure \ref{fig:Psiratio}.
It is quite apparent from the picture that $R_n$ interpolates a smooth function $R_x$ with some poles. Also, the behavior around these poles seems to be random. 

As observed also from Figure \ref{fig:Psinonline} it seems that Figure \ref{fig:Psiratio}  hovers around a horizontal line (in this particular case it seems to be around $-\phi$, the golden ratio, but this is specific to $q = 5$).
We do not yet have an explanation for these curiosities.

\begin{figure}[H]
    \centering
    \includegraphics[]{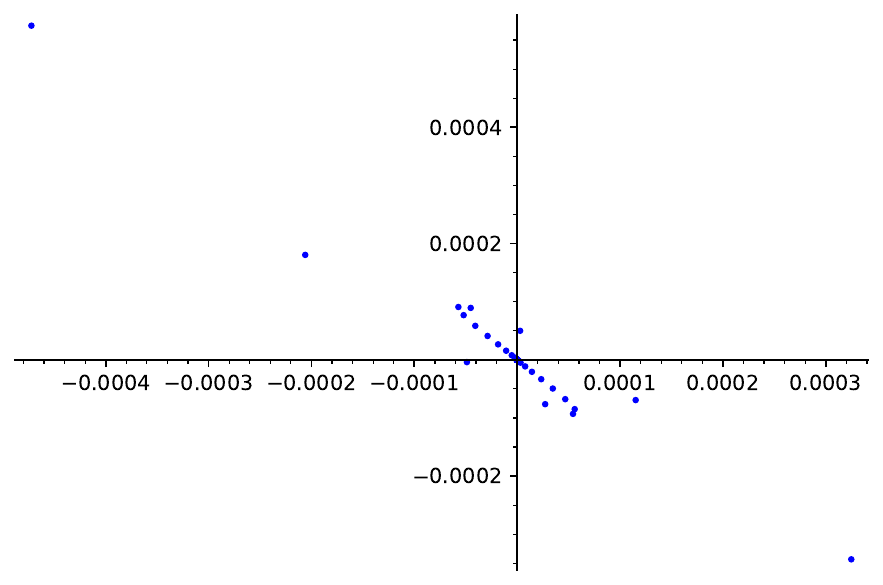}
    \caption{Points $\psi_{n,\chi}$ plotted in the complex plane for $12 \leq n \leq 250$.}
    \label{fig:Psinonline}

    \centering
    \includegraphics[]{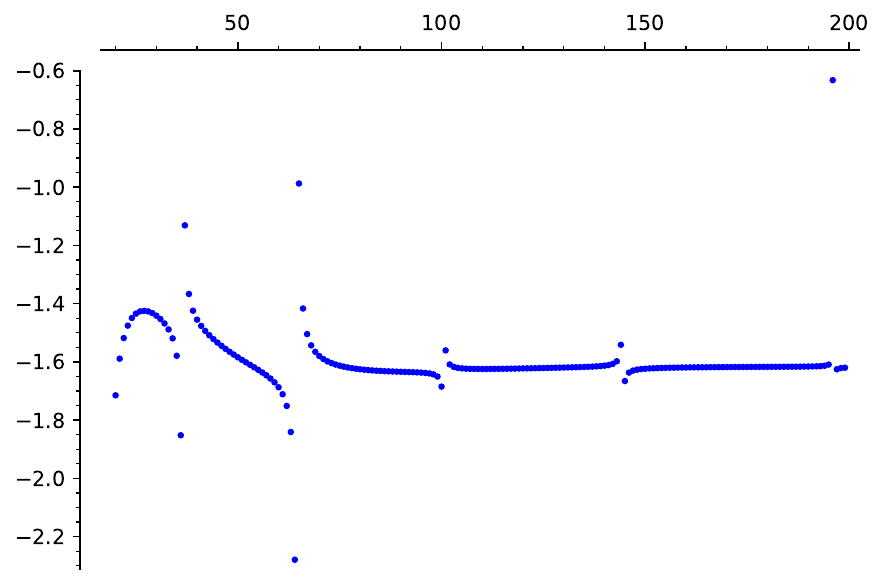}
    \caption{Graph of $n \longmapsto R_n := \im(\psi_{n, \chi})/\re(\psi_{n, \chi})$ for $20 \leq n \leq 200$.}
    \label{fig:Psiratio}
\end{figure}

\bigskip

\section*{Acknowledgement} 

The authors are very grateful to Christophe Delaunay for initiating this project and many insightful discussions, and to Olivier Ramar\'e for his very interesting suggestions. They both anticipated possible generalizations of the formula in \cite{DH24}, independently. 

S.D. is in debt to Youness Lamzouri for discussing various generalizations, 
pointing out relations with Fekete polynomials, and many other aspects including higher moments of $L-$functions, and finally for communicating the reference \cite{Ram80}.
He also thanks Lucas Benigni, Erik Carlsson, Francesco Cellarosi, Thierry Daud\'e, Hélène Guérin, Erwan Hillion, Joseph Najnudel, Dan Romik, Zachary Selk and Thomas Simon for stimulating discussions and insightful connections.

E.R. is partially funded by the ANR-23-CE40-0006-01 Gaec project

\bigskip


\end{document}